\newcolumntype{L}{>{\displaystyle}l}
\newcolumntype{C}{>{\displaystyle}c}
\newcolumntype{R}{>{\displaystyle}r}
\newcommand{\R}{\ensuremath{\mathbb{R}}}
\newcommand{\CC}{\mathcal{C}}
\newcommand{\CR}{\ensuremath{\mathcal{R}}}
\newcommand{\CF}{\ensuremath{\mathcal{F}}}
\newcommand{\CG}{\ensuremath{\mathcal{G}}}
\newcommand{\CM}{\ensuremath{\mathcal{M}}}
\newcommand{\CT}{\ensuremath{\mathcal{T}}}
\newcommand{\CO}{\ensuremath{\mathcal{O}}}
\newcommand{\ov}{\overline}
\newcommand{\T}{\theta}
\newcommand{\f}{\varphi}
\newcommand{\al}{\alpha}
\newcommand{\be}{\beta}
\newcommand{\cl}{\mbox{\normalfont{Cl}}}
\newcommand{\A}{\ensuremath{\mathcal{A}}}
\newcommand{\B}{\ensuremath{\mathcal{B}}}
\newcommand{\C}{\ensuremath{\mathcal{C}}}
\newcommand{\D}{\ensuremath{\mathcal{D}}}
\newcommand{\X}{\ensuremath{\mathcal{X}}}
\newcommand{\Lie}{\ensuremath{\mathcal{L}}}
\newcommand{\de}{\delta}
\def\p{\partial}
\def\e{\varepsilon}
\newtheorem {theorem} {Theorem} 
\newtheorem {corollary} [theorem] {Corollary}
\newtheorem {lemma} [theorem] {Lemma}
\newtheorem {remark} {Remark}
\begin{document}

\title[]
{On the periodic solutions of a\\ generalized smooth or non-smooth\\ perturbed planar double pendulum}

\author[J. Llibre, D.D. Novaes and M.A. Teixeira]
{Jaume Llibre$^1$, Douglas D. Novaes$^2$  and Marco Antonio
Teixeira$^2$}

\address{$^1$ Departament de Matematiques,
Universitat Aut\`{o}noma de Barcelona, 08193 Bellaterra, Barcelona,
Catalonia, Spain} \email{jllibre@mat.uab.cat}

\address{$^2$ Departamento de Matematica, Universidade
Estadual de Campinas, Caixa Postal 6065, 13083--859, Campinas, SP,
Brazil} \email{ddnovaes@gmail.com, teixeira@ime.unicamp.br}

\let\thefootnote\relax\footnotetext{Corresponding author Douglas D. Novaes: Departamento de Matematica, Universidade
Estadual de Campinas, Caixa Postal 6065, 13083--859, Campinas, SP,
Brazil. Tel. +55 19 91939832, Fax. +55 19 35216094, email: ddnovaes@gmail.com}

\subjclass[2010]{37G15, 37C80, 37C30}

\keywords{periodic solution, double pendulum, averaging theory, smooth perturbation, non-smooth perturbation}
\date{}

\maketitle

\begin{abstract}
We provide sufficient conditions for the existence of periodic
solutions with small amplitude of the non--linear planar double pendulum perturbed by smooth or non--smooth functions.
\end{abstract}

\section{Introduction and statement of the main results}\label{s1}

We consider a system of two point masses $m_1$ and $m_2$ moving in a
fixed plane, in which the distance between a point $P$ (called pivot)
and $m_1$ and the distance between $m_1$ and $m_2$ are fixed, and
equal to $l_1$ and $l_2$ respectively. We assume the masses do not
interact. We allow gravity to act on the masses $m_1$ and $m_2$.
This system is called the planar {\it double pendulum}.

\smallskip

The position of the double pendulum is determined by the two angles
$\phi_1$ and $\phi_2$ shown in Figure \ref{fig1}. The corresponding Lagrange equations of motion are
\begin{equation}\label{pendulum}
\begin{array}{L}
(m_{1}+m_{2})l_{1}\ddot{\phi}_{1}+m_{2}l_{2}\ddot{\phi}_{2}\cos(\phi_1-\phi_2)+
(m_{1}+m_{2})g\sin(\phi_1)\\
+m_2 l_2 \dot{\phi}_2^2\sin(\phi_1-\phi_2)=0,\\

m_2l_{1}\ddot{\phi}_{1}\cos(\phi_1-\phi_2)+m_2l_{2}\ddot{\phi}_{2}+m_2g\sin(\phi{2})+m_2l_1\dot{\phi}_1^2\sin(\phi_1-\phi_2)=0,
\end{array}
\end{equation}
where $g$ is the acceleration of the gravity.
For more details on these equations of motion see \cite{Ir}. Here
the dot denotes derivative with respect to the time $T$.

\begin{figure}[h]
\psfrag{P}{$P$}
\psfrag{A}{$l_{1}$} \psfrag{B}{$\phi_{1}$}
\psfrag{C}{$\phi_{2}$} \psfrag{D}{$l_{2}$} \psfrag{E}{$m_{1}$}
\psfrag{F}{$m_{2}$} \psfrag{G}{$g$}
\includegraphics[width=3cm]{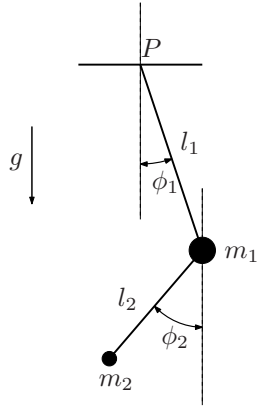}
\vskip 0cm \centerline{} \caption{\small \label{fig1} The planar
double pendulum.}
\end{figure}

\smallskip

The authors in \cite{LNT} have studied, in
the vicinity of the equilibrium $\phi_1= \phi_2= 0$, the persistence of periodic solutions of system (\ref{pendulum}) perturbed smoothly in the particular case when $m_{1}=m_{2}$ and $l_{1}=l_{2}$. Now $m_{1},m_{2},l_{1}$ and $l_{2}$ can take arbitrary positive values and we shall study the periodic orbits of system (\ref{pendulum}) which persist under smooth and non-smooth perturbations. 

\smallskip

Denote the expressions for $\ddot{\phi}_{1}$ and $\ddot{\phi}_{2}$ in \eqref{pendulum} respectively by $H_1(\phi_1,\dot{\phi}_1,\phi_2,\dot{\phi}_2)$ and $H_2(\phi_1,\dot{\phi}_1,\phi_2,\dot{\phi}_2)$. In this paper we shall consider the perturbed problem
\begin{equation}\label{pendulum2}
\begin{array}{LL}
\ddot{\phi}_1=&H_1(\phi_1,\dot{\phi}_1,\phi_2,\dot{\phi}_2)+\e \left(\hat F_1(t,\phi_1,\dot{\phi}_1,\phi_2,\dot{\phi}_2)+\hat F_2(t,\phi_1,\dot{\phi}_1,\phi_2,\dot{\phi}_2){\rm sgn}(\phi_{1})\right)\\
& +\e^2 \left(\hat R_1(t,\phi_1,\dot{\phi}_1,\phi_2,\dot{\phi}_2)+\hat R_2(t,\phi_1,\dot{\phi}_1,\phi_2,\dot{\phi}_2){\rm sgn}(\phi_{1})\right)+\CO(\e^3),\\
\ddot{\phi}_2=&H_2(\phi_1,\dot{\phi}_1,\phi_2,\dot{\phi}_2)+\e \left(\hat F_3(t,\phi_1,\dot{\phi}_1,\phi_2,\dot{\phi}_2)+\hat F_4(t,\phi_1,\dot{\phi}_1,\phi_2,\dot{\phi}_2){\rm sgn}(\phi_{2})\right)\\
& +\e^2 \left(\hat R_3(t,\phi_1,\dot{\phi}_1,\phi_2,\dot{\phi}_2)+\hat R_4(t,\phi_1,\dot{\phi}_1,\phi_2,\dot{\phi}_2){\rm sgn}(\phi_{2})\right)+\CO(\e^3).\\
\end{array}
\end{equation}

The function ${\rm sgn}(z)$ denotes the sign function, i.e.
\[
{\rm sgn}(z)=\left\{
\begin{array}{cl}
-1 & \mbox{if $z<0$,}\\
0 & \mbox{if $z=0$,}\\
1 & \mbox{if $z>0$.}
\end{array}
\right.
\]

Here the smooth functions $\hat F_i$ and $\hat R_i$ for $i=1,2,3,4$ define the perturbation. These functions are respectively $T_{F_i}$--periodic and $T_{R_i}$--periodic in
$t$ and respectively in resonance $p_{F_i}$:$q_{F_i}$ and $p_{R_i}$:$q_{R_i}$ with some of the periodic solutions
of the linearized unperturbed double pendulum, being $p$ and $q$ relatively prime positive integers for $p=p_{F_i},\,p_{R_i}$, $q=q_{F_i},\,q_{R_i}$ and $i=1,2,3,4$. We also assume that $F_i(t,0,0,0,0)=0$ for $i=1,2,3,4$. 

\begin{remark}\label{r1}
For simplicity, we can assume that the functions $\hat F_i$ and $\hat R_i$ for $i=1,2,3,4$ are $T$--periodic with $T=p T_j$ for some integer $p$ where $T_j$ for $j=1,2$ are the periods of the solutions of the linearized unperturbed double pendulum. Indeed, if we take $p$ the least common multiple among $p_{F_i}$ and $p_{R_i}$ for $i=1,2,3,4$, then there exists integers $n_{F_i}$ and $n_{R_i}$ such that $p=n_{F_i}\,p_{F_i}=n_{R_i}\,p_{R_i}$ for $i=1,2,3,4$. Hence
\[
pT_j=n_{F_i}q_{F_i}\dfrac{p_{F_i}}{q_{F_i}}T_j=n_{R_i}q_{R_i}\dfrac{p_{R_i}}{q_{R_i}}T_j.
\]
For $i=1,2,3,4$, and $j=1,2$.
\end{remark}

Note that the functions $\hat F_{i}$ and $\hat R_i$ for $i=1,2,3,4$, can be taken in a certain way arbitrary, i.e., only assuming some hypotheses. It makes us able to provide, in a physical context, real meaning for these functions. In our case, since we are working with discontinuity in the variables $\T_1$ and $\T_2$, the functions $\hat F_{1}$, $\hat F_{2}$, $\hat R_{1}$ and $\hat R_{2}$  could model the escapement for the particle $m_1$, and the functions $\hat F_{3}$, $\hat F_{4}$, $\hat R_{3}$ and $\hat R_{4}$ could model the escapement for the particle $m_2$. If we work with discontinuity in the variables $\T'_1$ and $\T'_2$, instead with discontinuity in the variables $\T_1$ and $\T_2$, the respective functions could model the Coulomb Friction. We also can work composing these two phenomena.  For more details on physical systems with discontinuous models see, for instance, \cite{AVK} and \cite{B}.

\smallskip

Now, we follow the steps:
\begin{itemize}
\item[(i)] proceed with the change of variable $\phi_1=\e\T_1$ and $\phi_2=\e\T_2$;
\[
\begin{array}{LL}
\ddot{\T}_1=&\dfrac{1}{\e}H_1(\e\T_1,\e\dot{\T}_1,\e\T_2, \e\dot{\T}_2)+\hat F_1(t,\e\T_1,\e\dot{\T}_1,\e\T_2, \e\dot{\T}_2)+\hat F_2(t,\e\T_1,\e\dot{\T}_1,\e\T_2, \e\dot{\T}_2){\rm sgn}(\T_{1})\\
&+\e\left(\hat R_1(t,\e\T_1,\e\dot{\T}_1,\e\T_2, \e\dot{\T}_2)+\hat R_2(t,\e\T_1,\e\dot{\T}_1,\e\T_2, \e\dot{\T}_2){\rm sgn}(\T_{1})\right)+\CO(\e^3),\\
\ddot{\T}_2=&\dfrac{1}{\e}H_2(\e\T_1,\e\dot{\T}_1,\e\T_2, \e\dot{\T}_2)+F_3(t,\e\T_1,\e\dot{\T}_1,\e\T_2, \e\dot{\T}_2)+F_4(t,\e\T_1,\e\dot{\T}_1,\e\T_2, \e\dot{\T}_2){\rm sgn}(\T_{2})\\
&+\e\left(\hat R_1(t,\e\T_1,\e\dot{\T}_1,\e\T_2, \e\dot{\T}_2)+\hat R_2(t,\e\T_1,\e\dot{\T}_1,\e\T_2, \e\dot{\T}_2){\rm sgn}(\T_{2})\right)+\CO(\e^3).\\
\end{array}
\]
\item[(ii)] expand in Taylor series, for $\e=0$, the expressions of $\ddot{\theta}_{1}$ and $\ddot{\theta}_{2}$;
\item[(iii)] Take a new time $t$ given by the rescaling $t =\al\,\tau$, with $\al=\sqrt{l_1\,m_1/(g\,m_2)}$; 
\item[(iv)] and finally, denote 
\[
\begin{array}{CCC}
a=\dfrac{m_1+m_2}{m_2}>1 &\textrm{and}& b=\dfrac{l_1(m_1+m_2)}{l_2m_2}>0.
\end{array}
\]
\end{itemize}

Hence, we obtain the following equations
of motion for the double pendulum
\begin{equation}\label{e1a}
\begin{array}{LL}
\T''_{1}=&-a\T_{1}+\T_{2}+\e\left(K_1(\tau)+F_1(\tau,\T_1,\T'_1,\T_2, \T'_2)+\left(K_2(\tau)+F_2(\tau,\T_1,\T'_1,\T_2, \T'_2)\right){\rm sgn}(\T_{1})\right)\\
&+\e^2 R_1(\tau,\T_1,\T'_1,\T_2,\T'_2,\e), \vspace{0.2cm}\\
\T''_{2}=&b\T_{1}-b\T_{2}+\e \left(K_3(\tau)+F_3(\tau,\T_1,\T'_1,\T_2, \T'_2)+\left(K_4(\tau)+F_4(\tau,\T_1,\T'_1,\T_2,\T'_2)\right){\rm sgn}(\T_{2})\right)\\
&+\e^2 R_2(\tau,\T_1,\T'_1,\T_2,\T'_2,\e),
\end{array}
\end{equation}
where now the prime denotes derivative with respect to the new time $t$. Here the functions $F_i$, for $i=1,2,3,4$, are linear in the spatial variables, and are given by
\[
F_i(\tau,\T_1,\T'_1,\T_2,\T'_2)=d_i^1(\tau)\T_1+d_i^2(\tau)\T'_1+d_i(\tau)^3\T_2+d_i(\tau)^4\T'_2.
\]
with
\[
\begin{array}{CC}
d_i^1(\tau)=\al^2\dfrac{\p \hat F_i}{\p\phi_1}(\al\tau,\vec{0}),&
d_i^2(\tau)=\al\dfrac{\p \hat F_i}{\p\dot{\phi}_1}(\al\tau,\vec{0}),\\
d_i^3(\tau)=\al^2\dfrac{\p \hat F_i}{\p\phi_2}(\al\tau,\vec{0}),&
d_i^4(\tau)=\al\dfrac{\p \hat F_i}{\p\dot{\phi}_2}(\al\tau,\vec{0}),
\end{array}
\]
and $K_i(\tau)=\al^2\hat R_i(\al\tau,0,0,0)$. Observe that, for $i,j=1,2,3,4$, $d_i^j(\tau)$ and $K_i(\tau)$ are $T/\al$--periodic functions.

\bigskip

The objective of this paper is to provide a system of equations whose simple zeros provide periodic solutions (see Figure \ref{PeriodicSolution}) of the
perturbed planar double pendulum \eqref{pendulum2}.

\begin{figure}[h]
\centering
\psfrag{Tp}{$\T'_1$}
\psfrag{T}{$\T_1$}
\psfrag{S}{$(\T_2,\T'_2)$}
\psfrag{C}{$m$}
\includegraphics[width=8.5cm]{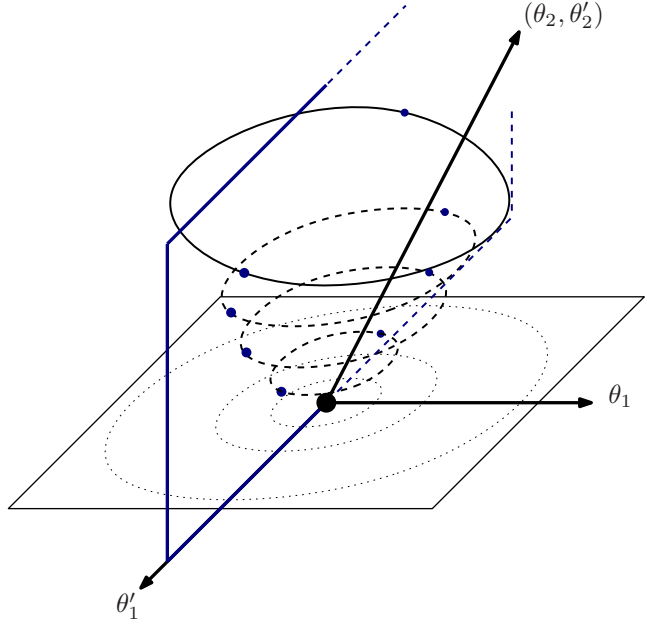}
\vskip 0cm \centerline{} \caption{\small \label{PeriodicSolution} Periodic solution of the perturbed system \eqref{pendulum2} converging to the origin, when $\e\to 0$.}
\end{figure}

\smallskip

In order to present our results we need
some preliminary definitions and notations.

\smallskip

The unperturbed system \eqref{e1a} has a unique singular point, the
origin with eigenvalues $\pm \omega_{1} \, i, \pm \omega_{2}\, i$, where
\[
\omega_{1}= \dfrac{\sqrt{a+b-\sqrt{\Delta}}}{\sqrt{2}}, \quad \omega_{2}=\dfrac{\sqrt{a+b+\sqrt{\Delta}}}{\sqrt{2}},
\]
with $\Delta=(a-b)^2+4 b>0$. Consequently this system in the phase space $(\T_1,\T'_1,
\T_2,\T'_2)$ has two planes filled with periodic solutions except the origin. The periods of such periodic orbits are
\[
T_1= \dfrac{2\pi}{\omega_{1}} \quad \mbox{or} \quad T_2=
\dfrac{2\pi}{\omega_{2}}.
\]
These periodic orbits live into the planes associated to the eigenvectors
with eigenvalues $\pm \omega_{1} \, i$ or $\pm \omega_{2}\, i$, respectively. We shall study which of these
periodic solutions persist for the perturbed system \eqref{pendulum2} when
the parameter $\e$ is sufficiently small and the functions of perturbation
$\hat F_i$ and $\hat R_i$ for $i=1,2,3,4$ have period either $p \al T_1$, or $p\al T_2$, with $p$ positive integer.

\begin{remark}\label{CH}
We say that the {\it Crossing Hypothesis} is satisfied if there exists a compact set $D\subset\R^4$ such that every orbit starting in $D$ reaches the set of discontinuity only at its crossing regions (see Appendix A).
\end{remark}

Let $X_{X_0,Y_0}{(\tau)}$ be the periodic function
\[
X_{X_0,Y_0}{(\tau)}= Y_0 \cos \left(\omega_{1}\, \tau \right)+X_0 \sin
\left(\omega_{1}\, \tau \right),
\]
then we define the non--smooth function $\CF_1(X_0,Y_0)$ by
\begin{equation}\label{e3a}
\begin{array}{L}
\int_0^{pT_1}\sin\left(\omega_{1} \, \tau\right)\left(2 b (\bar{F}_1+K_1(\tau))+(\bar{F}_3+K_3(\tau)) \left(a-b+\sqrt{\Delta}\right)\right)\,d\tau\\
+\int_0^{pT_1}\sin\left(\omega_{1} \, \tau\right)\left(2 b (\bar{F}_2+K_2(\tau))+(\bar{F}_4+K_4(\tau)) \left(a-b+\sqrt{\Delta}\right)\right)\,{\rm sgn}(X_{X_0,Y_0}{(\tau)}) \,d\tau,
\end{array}
\end{equation}
and the non--smooth function $\CF_2(X_0,Y_0)$ by
\begin{equation}\label{e3b}
\begin{array}{L}
\int_0^{pT_1}\cos\left(\omega_{1} \, \tau\right)\left(2 b (\bar{F}_1+K_1(\tau))+(\bar{F}_3+K_3(\tau)) \left(a-b+\sqrt{D}\right)\right)\,d\tau\\
+\int_0^{pT_1}\cos\left(\omega_{1} \, \tau\right)\left(2 b (\bar{F}_2+K_2(\tau))+(\bar{F}_4+K_4(\tau)) \left(a-b+\sqrt{D}\right)\right)\,{\rm sgn}(X_{X_0,Y_0}{(\tau)}) \,d\tau.
\end{array}
\end{equation}
where
\begin{equation}\label{jj}
\bar F_i= F_i(\tau,A_1,B_1,C_1,D_1)
\end{equation}
for $i=1,2,3,4$ with
\[
\begin{array}{l}
A_1= \dfrac{\left(-a+b+\sqrt{\Delta}\right)}{2\, b\,\omega_{1}} \left(X_0 \cos \left(\omega_{1}\, \tau \right) + Y_0 \sin\left(\omega_{1}\, \tau \right)\right), \vspace{0.2cm}\\
B_1= \dfrac{ \left(-a+b+\sqrt{\Delta}\right)}{2 b} \left(Y_0 \cos \left(\omega_{1}\, \tau \right)-X_0 \sin
\left(\omega_{1}\, \tau \right)\right), \vspace{0.2cm}\\
C_1= \dfrac{1}{\omega_{1}} \left(X_0 \cos \left(\omega_{1}\, \tau \right) + Y_0 \sin
\left(\omega_{1}\, \tau \right)\right), \vspace{0.2cm}\\
D_1= Y_0 \cos \left(\omega_{1}\, \tau \right)-X_0 \sin
\left(\omega_{1}\, \tau \right).
\end{array}
\]

A zero $(X_0^*,Y_0^*)$ of the system of the non-smooth functions
\begin{equation}\label{e4}
\CF_1(X_0,Y_0)=0,\quad \CF_2(X_0,Y_0)=0,
\end{equation}
such that
\[
\det \left(\left.
\dfrac{\p(\CF_1,\CF_2)}{\p(X_0,Y_0)}\right|_{(X_0,Y_0)=
(X_0^*,Y_0^*)}\right) \neq 0,
\]
is called a {\it simple zero} of system \eqref{e4}.

\smallskip

Our main result on the periodic solutions of the non-smooth perturbed double
pendulum \eqref{pendulum2} which bifurcate from the periodic solutions of
the unperturbed double pendulum \eqref{pendulum} with period $T_1$
traveled $p$ times is the following.

\begin{theorem}\label{t1}
Assume that the functions $\hat F_i$ and $\hat R_i$ of the non-smooth perturbed double pendulum \eqref{pendulum2} are periodic in $t$ of period
$p\al T_1$ with $p$ positive integer. Also assume that the Crossing Hypothesis (see Remark \ref{CH}) is satisfied. Then
for $|\e|>0$ sufficiently small and for every simple zero
$(X_0^*,Y_0^*)\neq (0,0)$ of the non-smooth system \eqref{e4} such that the orbits pass by $D$, the
non-smooth perturbed double pendulum \eqref{pendulum2} has a $p\al T_1$--periodic solution $(\phi_1(t,\e),\phi_2(t,\e))\to(0,0)$ when $\e\to 0$.
\end{theorem}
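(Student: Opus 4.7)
The plan is to apply a suitable averaging theorem for non-smooth differential systems (presumably the one stated in Appendix A of the paper) to system \eqref{e1a}, since that system is already in the standard small-parameter form required by averaging. First I would rewrite \eqref{e1a} as a first-order system in the variables $(\T_1,\T'_1,\T_2,\T'_2)\in\R^4$ and perform a linear change of coordinates that block-diagonalizes the unperturbed linear part into two $2\times 2$ harmonic oscillator blocks with frequencies $\omega_1$ and $\omega_2$. The explicit form of the change can be read off from the eigenvectors of the associated matrix; in particular the factors $(-a+b+\sqrt{\Delta})/(2b)$ and $1/\omega_1$ appearing in $A_1,B_1,C_1,D_1$ are the entries of the eigenvector for the eigenvalue $i\omega_1$, and the parametrization $A_1,\ldots,D_1$ is precisely the unperturbed $T_1$-periodic orbit in the $(\omega_1,-\omega_1)$-invariant plane, parametrized by the initial condition $(X_0,Y_0)$.

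Next I would rescale time by a factor $\omega_1$ so that the unperturbed motion on the plane of $T_1$-periodic solutions is exactly $2\pi$-periodic, and pass to polar/rotating coordinates (or equivalently apply the variation of parameters formula) restricted to that plane. In these coordinates the system becomes a perturbation of the identity, i.e., it has the form $\dot{u}=\e\,G(s,u)+\e^2(\cdots)$, where $u=(X_0,Y_0)$ parametrizes the two-dimensional manifold of unperturbed $T_1$-periodic orbits, and $G$ is $pT_1$-periodic in $s$ and only piecewise smooth in $u$ because of the two $\operatorname{sgn}$ terms. The averaged function is then
\[
\overline{G}(X_0,Y_0)=\frac{1}{pT_1}\int_0^{pT_1}G(s,X_0,Y_0)\,ds,
\]
and a direct computation, inserting the parametrization \eqref{jj} and the values $A_1,B_1,C_1,D_1$ and using that the component along the plane of $T_2$-periodic solutions is not excited to first order, identifies the two components of $\overline{G}$ (up to a non-zero constant factor coming from the change of coordinates and the time rescaling) with the functions $\CF_1(X_0,Y_0)$ and $\CF_2(X_0,Y_0)$ defined in \eqref{e3a}--\eqref{e3b}; the sine and cosine in those formulas are exactly the two rows of the inverse of the fundamental matrix of the unperturbed oscillator with frequency $\omega_1$.

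Once this is done, the non-smooth averaging theorem furnishes the conclusion: for each simple zero $(X_0^*,Y_0^*)\neq(0,0)$ of $(\CF_1,\CF_2)$ whose associated unperturbed orbit lies in the compact domain $D$ where the Crossing Hypothesis holds, there exists, for $|\e|>0$ small, a $pT_1$-periodic solution of the first-order system in the new variables that converges to that orbit as $\e\to 0$. Undoing the linear change of coordinates, the time rescaling $t=\al\tau$, and the scaling $\phi_i=\e\T_i$, one obtains a $p\al T_1$-periodic solution $(\phi_1(t,\e),\phi_2(t,\e))$ of \eqref{pendulum2} with $(\phi_1,\phi_2)\to(0,0)$ as $\e\to 0$, as claimed.

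The main obstacle is the non-smoothness: the right-hand side of \eqref{e1a} is discontinuous across $\{\T_1=0\}\cup\{\T_2=0\}$, so the classical smooth averaging theorem does not apply, and one must invoke the Filippov framework together with the non-smooth version of averaging. This is exactly where the Crossing Hypothesis is used: it guarantees that candidate periodic orbits intersect the discontinuity manifolds transversally (no sliding motion occurs), which is the hypothesis under which the non-smooth averaging theorem can be applied and under which the Brouwer-degree/implicit-function argument that upgrades a simple zero of the averaged function to a genuine periodic solution remains valid. The rest of the argument is a linear-algebra computation to verify that the averaged function coming out of the machinery is literally $(\CF_1,\CF_2)$ up to a non-vanishing constant, so that simple zeros in the two senses coincide.
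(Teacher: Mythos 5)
Your computational core is right: the change of variables, the identification of $A_1,\dots,D_1$ with the eigenvector data, and the identification of the averaged function with $(\CF_1,\CF_2)$ up to a nonzero constant all match what the paper does. But the crucial step --- how one is allowed to average a system whose right-hand side is discontinuous --- is where your argument has a gap. You invoke ``a suitable averaging theorem for non-smooth differential systems (presumably the one stated in Appendix A)'', but no such theorem exists in the paper: Appendix A only recalls Filippov's conventions (crossing, sliding, escaping regions), and the only averaging result available, Theorem \ref{tt} in Appendix B, requires $G_0,G_1,G_2$ to be $\CC^2$. The paper's actual device is to replace ${\rm sgn}$ by a smooth approximation $s_{\de}$, apply the smooth Malkin--Roseau theorem to the regularized system \eqref{d1a}--\eqref{d2}, and then pass to the limit $\de\to 0$ in the bifurcation function \eqref{eq:7}; the Crossing Hypothesis is used precisely to argue that the Poincar\'e map of the Filippov system restricted to $D$ is itself smooth (a composition of smooth Poincar\'e maps at transversal crossings), so that the limit argument is legitimate. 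Without either this regularization-and-limit construction or an explicit citation of a proved non-smooth averaging theorem, your proof is not self-contained.

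A second, smaller gap: you reduce to a two-dimensional ``perturbation of the identity'' on the plane of $T_1$-periodic orbits and dismiss the transverse directions by saying the $\omega_2$-plane ``is not excited to first order''. That is not the condition one needs. Since the periodic solution must be located in $\R^4$, one must verify hypothesis (ii) of Theorem \ref{tt}: the lower-right $2\times 2$ block of $M^{-1}(0)-M^{-1}(pT_1)$, which the paper computes to have determinant $4\sin^2\left(p\pi\,\omega_2/\omega_1\right)$, must be nonsingular. This is a non-resonance condition on $p\omega_2/\omega_1$ that controls the Lyapunov--Schmidt reduction onto the $(X_0,Y_0)$-plane; your argument never checks it, and skipping it means the simple zeros of $(\CF_1,\CF_2)$ need not a priori correspond to fixed points of the full four-dimensional return map.
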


Theorem \ref{t1} is proved in section \ref{s3}. Its proof is based
in the averaging theory for computing periodic solutions, see the
Appendix B.

\smallskip

Note that the periodic solution given in Theorem \ref{t1} is a periodic solution bifurcating at $\e=0$ from the equilibrium of system \eqref{pendulum2} localized at the origin of coordinates. For $|\e|>0$ sufficiently small this orbits is close to the plane defined by the eigenvectors of the eigenvalues $\pm i\omega_1$.

\smallskip

We provide an application of Theorem \ref{t1} in the following
corollary, which will be proved in section \ref{s4}.

\begin{corollary}\label{c1}
Suppose that $F_1=y/\al^2+f_1$, $F_3=w/\al^2+f_3$, and $f_1$, $F_2$, $f_3$, and $F_4$ has no linear term. Also suppose that $R_2=1/\al^2+r_2$, $R_4=1/\al^2+r_4$ and $R_1$, $r_2$, $R_3$, and $r_4$ has no constant term. Moreover, assume that all functions are $p\al T_1/q$--periodic. Then the differential system \eqref{pendulum2} for $|\e|> 0$ sufficiently small has one $p\al T_1$--periodic solution $(\phi_1(t,\e),\phi_2(t,\e))\to(0,0)$ when $\e\to 0$.
\end{corollary}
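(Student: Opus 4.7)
The plan is to derive the Corollary as a direct application of Theorem \ref{t1}. The period hypothesis is immediate because every $p\al T_1/q$-periodic function is a fortiori $p\al T_1$-periodic. The Crossing Hypothesis holds on a compact neighbourhood of the unperturbed periodic family $X_{X_0,Y_0}(\tau)$ because both $X_{X_0,Y_0}(\tau)$ and $C_1(\tau)$ vanish only with nonzero derivative, giving transversal crossings of the discontinuity surfaces $\{\T_1 = 0\}$ and $\{\T_2 = 0\}$; transversality persists for the $\e$-perturbed flow when $\e$ is small.

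The main step is to specialize the bifurcation functions \eqref{e3a}--\eqref{e3b} to the data prescribed by the Corollary and then to exhibit a simple nontrivial zero of \eqref{e4}. Under the structural assumptions all coefficients $d_i^j(\tau)$ appearing in the linear functions $F_i$ vanish except $d_1^2 \equiv d_3^4 \equiv 1/\al$, and the periodic factors collapse to $K_2 \equiv K_4 \equiv 1$, $K_1 \equiv K_3 \equiv 0$. Hence $\bar F_2 = \bar F_4 = 0$, $\bar F_1 = B_1/\al$, $\bar F_3 = D_1/\al$, and a direct simplification gives
\[
2b\,\bar F_1 + (a-b+\sqrt\Delta)\,\bar F_3 \;=\; \frac{2\sqrt\Delta}{\al}\bigl(Y_0\cos(\omega_1\tau) - X_0\sin(\omega_1\tau)\bigr),
\]
so integration against $\sin(\omega_1\tau)$ and $\cos(\omega_1\tau)$ over $[0,pT_1]$ produces the smooth parts of $\CF_1$ and $\CF_2$ as multiples of $-X_0$ and $+Y_0$ respectively. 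The non-smooth contributions are then evaluated via the phase shift $X_{X_0,Y_0}(\tau) = \sqrt{X_0^2+Y_0^2}\,\sin(\omega_1\tau + \phi)$ with $\tan\phi = Y_0/X_0$, which reduces the two sign integrals to explicit multiples of $X_0/\sqrt{X_0^2+Y_0^2}$ and $Y_0/\sqrt{X_0^2+Y_0^2}$.

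Putting the smooth and non-smooth pieces together, \eqref{e4} decouples into
\[
\CF_1 = X_0\bigl(-A + B/\sqrt{X_0^2+Y_0^2}\bigr) = 0,\qquad \CF_2 = Y_0\bigl(A + B/\sqrt{X_0^2+Y_0^2}\bigr) = 0,
\]
for explicit positive constants $A, B$ depending on $a,b,\al,p$. Since the factor in $\CF_2$ is strictly positive on $(X_0,Y_0) \neq (0,0)$, it forces $Y_0 = 0$, and $\CF_1 = 0$ then yields the unique nontrivial solution $|X_0| = B/A$. The Jacobian at this point is block diagonal (the off-diagonal entries vanish because they carry a factor of $Y_0^* = 0$) with both diagonal entries nonzero, so the zero is simple. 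The main obstacle is this algebraic bookkeeping: the structural assumptions must combine the coefficients $2b$ and $(a-b+\sqrt\Delta)$ against the expressions \eqref{jj} for $B_1, D_1$ in exactly the way that produces the sign asymmetry (minus in $\CF_1$, plus in $\CF_2$) needed for the decoupling and for the existence of a simple zero bounded away from the origin where the sign integrals are only Lipschitz. Once this is established, Theorem \ref{t1} delivers the asserted $p\al T_1$-periodic solution $(\phi_1(t,\e),\phi_2(t,\e)) \to (0,0)$ as $\e \to 0$.
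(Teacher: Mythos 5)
Your proposal is correct and follows essentially the same route as the paper: verify the Crossing Hypothesis by checking that the unperturbed periodic orbits meet the discontinuity surfaces $\{\T_1=0\}$ and $\{\T_2=0\}$ only transversally, reduce $\CF_1,\CF_2$ to explicit closed form under the structural hypotheses, exhibit a simple zero away from the origin, and invoke Theorem \ref{t1}. Your closed-form expressions (smooth parts proportional to $-X_0$ and $+Y_0$, non-smooth parts proportional to $X_0/\sqrt{X_0^2+Y_0^2}$ and $Y_0/\sqrt{X_0^2+Y_0^2}$) are moreover internally consistent, whereas the formulas printed in the paper satisfy $\CF_1\equiv-\CF_2$ as written (so every zero would be non-simple) and must contain typographical errors; your simple zero on the axis $Y_0=0$ agrees with a direct Fourier evaluation of the sign integrals.
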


Now let $Z^{Z_0,W_0}{(\tau)}$ be the periodic function
\[
Z^{Z_0,W_0}{(\tau)}= W_0 \cos \left(\omega_{2}\, \tau\right)+Z_0 \sin
\left(\omega_{2}\, \tau \right),
\]
then we define the non--smooth function $\CF^1(Z_0,W_0)$ by
\begin{equation}\label{e4a}
\begin{array}{L}
\int_0^{pT_2}\sin\left(\omega_{2} \, \tau\right)\left(-2 b (\bar{F}_1+K_1(\tau))+(\bar{F}_3+K_3(\tau)) \left(-a+b+\sqrt{\Delta}\right)\right)\,d\tau\\
+\int_0^{pT_2}\sin\left(\omega_{2} \, \tau\right)\left(2 b (\bar{F}_2+K_2(\tau))+(\bar{F}_4+K_4(\tau)) \left(-a+b+\sqrt{\Delta}\right)\right)\,{\rm sgn}(Z^{Z_0,W_0}{(\tau)}) \,d\tau,
\end{array}
\end{equation}
and the non--smooth function $\CF^2(Z_0,W_0)$ by
\begin{equation}\label{e4b}
\begin{array}{L}
\int_0^{pT_2}\cos\left(\omega_{2} \, \tau\right)\left(-2 b (\bar{F}_1+K_1(\tau))+(\bar{F}_3+K_3(\tau)) \left(a-b+\sqrt{D}\right)\right)\,d\tau\\
+\int_0^{pT_2}\cos\left(\omega_{2} \, \tau\right)\left(2 b (\bar{F}_2+K_2(\tau))+(\bar{F}_4+K_4(\tau)) \left(a-b+\sqrt{D}\right)\right)\,{\rm sgn}(Z^{Z_0,W_0}{(\tau)}) \,d\tau.
\end{array}
\end{equation}
where
\[
\bar F_i= F_i(t,A_2,B_2,C_2,D_2)
\]
for $i=1,2,3,4$ with
\[
\begin{array}{l}
A_2= -\dfrac{\left(a-b+\sqrt{\Delta}\right)}{2\, b\,\omega_{2}} \left(Z_0 \cos \left(\omega_{2}\, t \right) + W_0 \sin\left(\omega_{2}\, t \right)\right), \vspace{0.2cm}\\
B_2= -\dfrac{ \left(a-b+\sqrt{\Delta}\right)}{2 b} \left(W_0 \cos \left(\omega_{2}\, t \right)-Z_0 \sin
\left(\omega_{2}\, t \right)\right), \vspace{0.2cm}\\
C_2= \dfrac{1}{\omega_{2}} \left(Z_0 \cos \left(\omega_{2}\, t \right) + W_0 \sin
\left(\omega_{2}\, t \right)\right), \vspace{0.2cm}\\
D_2= W_0 \cos \left(\omega_{2}\, t \right)-Z_0 \sin
\left(\omega_{2}\, t \right).
\end{array}
\]

Consider the non-linear and non-smooth system
\begin{equation}\label{e4a}
\CF^1(Z_0,W_0)=0,\quad \CF^2(Z_0,W_0)=0.
\end{equation}

\smallskip

Our main result on the periodic solutions of the non-smooth perturbed double
pendulum \eqref{pendulum2} which bifurcate from the periodic solutions of
the unperturbed double pendulum \eqref{e1a} with period $T_2$
traveled $p$ times is the following.

\begin{theorem}\label{t1a}
Assume that the functions $\hat F_i$ and $\hat R_i$ of the non-smooth perturbed double pendulum \eqref{pendulum2} are periodic in $t$ of period
$p\al T_2$ with $p$ positive integer. Also assume that the Crossing Hypothesis (see Remark \ref{CH}) is satisfied. Then
for $\e>0$ sufficiently small and for every simple zero
$(Z_0^*,W_0^*)\neq (0,0)$ of the non-smooth system \eqref{e4a} such that the orbits pass by $D$, the
non-smooth perturbed double pendulum \eqref{pendulum2} has a $p\al T_2$--periodic solution $(\phi_1(t,\e),\phi_2(t,\e))\to(0,0)$ when $\e\to 0$.

\end{theorem}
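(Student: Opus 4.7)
The plan is to follow exactly the same strategy used in the proof of Theorem \ref{t1} in section \ref{s3}, but now to bifurcate from the family of periodic solutions of the unperturbed system \eqref{e1a} associated with the eigenvalues $\pm i\omega_2$ instead of $\pm i\omega_1$. The idea is to cast the perturbed system into the standard form for averaging by means of a linear change of variables that diagonalizes the unperturbed part so that the relevant invariant plane becomes a harmonic rotation with angular frequency $\omega_2$, and then to apply the non-smooth averaging theorem stated in Appendix B.

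First I would introduce coordinates $(u,v,Z,W)$ via the linear change of variables built from the eigenvectors corresponding to $\pm i\omega_2$, analogous to the one performed in section \ref{s3} for $\pm i\omega_1$. Under this transformation the linear unperturbed part becomes block diagonal, with the $(Z,W)$ block acting as a harmonic oscillator of frequency $\omega_2$. Expressing the original variables along the unperturbed orbit on the invariant plane $\{u=v=0\}$ yields precisely the expressions $A_2,B_2,C_2,D_2$ for $\T_1,\T'_1,\T_2,\T'_2$. After a further near-identity rescaling of the $(Z,W)$-plane, the system takes the slow form $\x' = \e F(\tau,\x) + \e^2 G(\tau,\x,\e)$ to which the averaging procedure of Appendix B applies, with $\x=(Z,W)$ and $\tau$-period equal to $pT_2$ under the stated resonance assumption.

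Next I would compute the averaged equations. Integrating the $(Z,W)$-components of $F$ over one period $pT_2$ of the unperturbed orbit yields exactly the integrals appearing in \eqref{e4a}--\eqref{e4b}: the first integral collects the smooth contributions coming from $K_i$ and the linearisations $\bar F_i = F_i(\tau,A_2,B_2,C_2,D_2)$, while the second integral collects the contributions of the discontinuous terms ${\rm sgn}(\T_1)$ and ${\rm sgn}(\T_2)$ once they have been rewritten, along the unperturbed $(Z,W)$-orbit, in terms of ${\rm sgn}(Z^{Z_0,W_0}(\tau))$. The numerical coefficients $\pm 2b$ and $(-a+b+\sqrt{\Delta})$ appearing in $\CF^1$ and $\CF^2$ are exactly the entries of the inverse eigenvector matrix, so producing them reduces to a routine linear-algebra computation once the $\omega_2$-eigenbasis is written down.

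The main obstacle, as in Theorem \ref{t1}, will be the rigorous justification of the averaging procedure in the presence of the discontinuities produced by ${\rm sgn}(\T_1)$ and ${\rm sgn}(\T_2)$. This is the role of the Crossing Hypothesis: it guarantees that the unperturbed periodic orbits under consideration meet the switching manifolds only at crossing points, so that the corresponding Filippov trajectories are well defined and the Poincar\'e displacement map depends continuously on $\e$ up to $\e=0$. Once this is in place, a simple zero $(Z_0^*,W_0^*)\neq (0,0)$ of \eqref{e4a} yields, via the Brouwer-degree argument underlying the non-smooth averaging theorem of Appendix B, a $pT_2$-periodic solution of the rescaled system \eqref{e1a}; undoing the rescalings $t=\al\tau$ and $\phi_j=\e\T_j$ then produces the desired $p\al T_2$-periodic solution $(\phi_1(t,\e),\phi_2(t,\e))$ of \eqref{pendulum2} which converges to $(0,0)$ as $\e\to 0$.
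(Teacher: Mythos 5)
Your overall strategy is the same as the paper's (whose proof of Theorem \ref{t1a} is literally ``analogous to Theorem \ref{t1}''): pass to the Jordan-form coordinates, regularize the sign functions, invoke the Crossing Hypothesis to make the Poincar\'e map of the Filippov system a composition of smooth maps, apply the averaging theorem of Appendix B, and recognize \eqref{e4a}--\eqref{e4b} as the bifurcation function. However, there is one step that, as you describe it, would fail. You claim that after a change of variables ``the system takes the slow form $\x' = \e F(\tau,\x) + \e^2 G(\tau,\x,\e)$ \ldots with $\x=(Z,W)$.'' It does not: the perturbation $G_1$ depends on all four variables $(X,Y,Z,W)$ through $\A,\B,\C,\D$, so the $(Z,W)$-equations do not decouple, and the variation-of-constants reduction of the full four-dimensional system is \emph{not} $pT_2$-periodic in $\tau$, because the fundamental matrix rotates the transverse $(X,Y)$-block with frequency $\omega_1$ and $pT_2$ is in general not a multiple of $T_1$. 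So standard periodic averaging of a two-dimensional slow system is not available here.

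The correct framework is Theorem \ref{tt} with $n=4$, $k=2$, $\mathcal{Z}$ the annulus in the plane $\{X=Y=0\}$, and this forces you to verify hypothesis (ii): the matrix $M^{-1}(0)-M^{-1}(pT_2)$ must vanish on the $k\times(n-k)$ block and have an invertible $(n-k)\times(n-k)$ block $\Delta_\alpha$ in the transverse $(X,Y)$-directions. A direct computation (the analogue of the one displayed in the proof of Theorem \ref{t1}) gives $\det(\Delta_\alpha)=4\sin^2\bigl(p\pi\,\omega_1/\omega_2\bigr)$, so you must observe that this is nonzero, i.e.\ that $p\,\omega_1/\omega_2$ is not an integer (a genuine restriction in resonant cases, implicitly assumed by the paper as well). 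This transverse nondegeneracy is the only non-routine verification in the whole argument and is entirely absent from your proposal; without it the averaging theorem of Appendix B cannot be invoked, and the conclusion about simple zeros of $\CF^1,\CF^2$ does not follow. The rest of your outline (the identification of $A_2,B_2,C_2,D_2$, the coefficients $\pm 2b$ and $-a+b+\sqrt{\Delta}$ from the inverse of \eqref{nn}, the role of the Crossing Hypothesis, and undoing the rescalings $t=\al\tau$, $\phi_j=\e\T_j$) matches the paper's route.
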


Theorem \ref{t1a} is also proved in section \ref{s3}.

\smallskip

Again the periodic solution given in Theorem \ref{t1a} is a periodic solution bifurcating at $\e=0$ from the equilibrium of system \eqref{pendulum2} localized at the origin of coordinates. For $|\e|>0$ sufficiently small this orbits is close to the plane defined by the eigenvectors of the eigenvalues $\pm i\omega_2$.

\smallskip

We provide an application of Theorem \ref{t1a} in the following
corollary, which will be proved in section \ref{s4}.

\begin{corollary}\label{c2}
Suppose that $F_1=y/\al^2+f_1$, $F_3=w/\al^2+f_3$, and $f_1$, $F_2$, $f_3$, and $F_4$ has no linear term. Also suppose that $R_2=1/\al^2+r_2$, $R_4=1/\al^2+r_4$ and $R_1$, $r_2$, $R_3$, and $r_4$ has no constant term. Moreover, assume that all functions are $p\al T_2/q$--periodic. Then the differential system \eqref{pendulum2} for $|\e|> 0$
sufficiently small has one $p\al T_2$--periodic solution $(\phi_1(t,\e),\phi_2(t,\e))\to(0,0)$ when $\e\to 0$.
\end{corollary}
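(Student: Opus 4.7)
The plan is to reduce Corollary \ref{c2} to Theorem \ref{t1a} by exhibiting a simple zero $(Z_0^*,W_0^*)\neq(0,0)$ of the averaged system \eqref{e4a} and verifying the Crossing Hypothesis; the argument is the direct analogue, on the $T_2$--plane, of the proof of Corollary \ref{c1}, with $(X_0,Y_0)$, $\omega_1$ and $T_1$ replaced throughout by $(Z_0,W_0)$, $\omega_2$ and $T_2$.

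First, I would simplify the perturbation data. Since each $F_i$ is, by its definition just after \eqref{e1a}, linear in the spatial variables, the assumption that $f_1$, $F_2$, $f_3$ and $F_4$ have no linear term forces $f_1\equiv f_3\equiv 0$ and $F_2\equiv F_4\equiv 0$, leaving only $F_1=\T_1'/\al^2$ and $F_3=\T_2'/\al^2$. The parallel assumption on the $R_i$'s reduces the $K_i$--contribution to \eqref{e4a}--\eqref{e4b} to a pair of nonzero constants arising from the $1/\al^2$ terms. Evaluating $\bar F_i=F_i(\tau,A_2,B_2,C_2,D_2)$ then yields $\bar F_1=B_2/\al^2$, $\bar F_3=D_2/\al^2$, and $\bar F_2=\bar F_4=0$.

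Next, I would compute $\CF^1(Z_0,W_0)$ and $\CF^2(Z_0,W_0)$ explicitly. The resonance hypothesis that every $\hat F_i$ and $\hat R_i$ is $p\al T_2/q$--periodic forces the smooth integrals on $[0,pT_2]$ to collapse by orthogonality of $\{\sin(\omega_2\tau),\cos(\omega_2\tau)\}$ against the Fourier modes of $K_i(\tau)$, producing an explicit linear form in $(Z_0,W_0)$. For the $\sgn$--weighted integrals, $Z^{Z_0,W_0}(\tau)$ is a single sinusoid with zeros equally spaced by $\pi/\omega_2$, so splitting the integral at these zeros and combining consecutive half--periods yields terms proportional to $Z_0/\sqrt{Z_0^2+W_0^2}$ and $W_0/\sqrt{Z_0^2+W_0^2}$, with a nonzero prefactor carrying the positive factor $(-a+b+\sqrt{\Delta})/(2b)$.

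The resulting system then has the shape $M(Z_0,W_0)^{\top}+c(Z_0,W_0)^{\top}/\sqrt{Z_0^2+W_0^2}=0$ for a computable $2\times 2$ matrix $M$ and a nonzero constant $c$. In polar coordinates $(Z_0,W_0)=\rho(\cos\T,\sin\T)$ this becomes the eigenvalue equation $Mv=-(c/\rho)v$ for the unit vector $v=(\cos\T,\sin\T)^{\top}$, from which a real eigenvalue of $M$ of suitable sign produces $\rho^*>0$ and $\T^*$, hence the required zero $(Z_0^*,W_0^*)\neq(0,0)$; its Jacobian, computed using the $0$--homogeneity of the $\sgn$--block together with invertibility of $M$, is nonzero, so the zero is simple. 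On a small compact neighbourhood $D$ of the corresponding orbit the unperturbed motion meets $\{\T_1=0\}$ and $\{\T_2=0\}$ transversally away from the origin, which gives the Crossing Hypothesis for $|\e|$ small, and Theorem \ref{t1a} then delivers the desired $p\al T_2$--periodic solution. I expect the main obstacle to be the algebraic verification that the specific $M$ and $c$ produced by the trigonometric integrals admit a real eigenvalue of the correct sign yielding $\rho^*>0$; the bookkeeping itself is routine, but one must resist linearising at the origin, since the simple zero lives at positive radius where the $\sgn$--integrals contribute non-trivially.
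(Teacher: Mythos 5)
Your outline is the same route the paper takes: the paper proves Corollary \ref{c2} by declaring it ``completely analogous'' to Corollary \ref{c1}, whose proof (i) verifies the Crossing Hypothesis by computing the Lie derivatives $(\Lie_{\X_i})(g_j)$, identifying the tangency set $\CT=\{(x,y,0,0)\}\cup\{(0,0,z,w)\}$ and observing that the nontrivial periodic orbits of the relevant eigenplane never meet it, and (ii) computes $\CF^1,\CF^2$ explicitly by splitting the $\sgn$--integrals at the equally spaced zeros of the sinusoid, obtaining exactly the structure you describe: a linear term from the resonant $\T_1'/\al^2$, $\T_2'/\al^2$ perturbations plus a $0$--homogeneous term of size $\sim(Z_0,W_0)/\sqrt{Z_0^2+W_0^2}$ coming from $K_2=K_4=1$. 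Your preliminary reductions ($f_1\equiv f_3\equiv0$, $F_2\equiv F_4\equiv0$, $K_1=K_3=0$, $K_2=K_4=1$) and your polar--coordinate reformulation are all consistent with that.

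The genuine gap is that you stop exactly where the corollary's content begins: you never verify that the ``computable'' matrix $M$ and constant $c$ actually admit a solution of $Mv=-(c/\rho)v$ with $\rho^*>0$, and you flag this yourself as an expected obstacle rather than resolving it. Without that verification the existence of a simple zero --- hence the conclusion --- is not established. In fact the computation is short and should be carried out: the resonant linear part contributes (up to the overall constant $1/(2\sqrt{\Delta}pT_2)$ and the positive factor from the eigenvector normalization) a multiple of the integral of $\sin(\omega_2\tau)$, resp.\ $\cos(\omega_2\tau)$, against $W_0\cos(\omega_2\tau)-Z_0\sin(\omega_2\tau)$, so $M$ is a nonzero multiple of $\mathrm{diag}(-1,1)$--type rather than a general $2\times2$ matrix; one then exhibits the zero explicitly, as the paper does with $(X_0^*,Y_0^*)=\bigl(\tfrac{\sqrt2}{\sqrt{\Delta}\pi}(a+b+\sqrt{\Delta}),\tfrac{\sqrt2}{\sqrt{\Delta}\pi}(a+b+\sqrt{\Delta})\bigr)$ in the $T_1$ case, and checks simplicity directly (your observation that the $\sgn$--block is $0$--homogeneous does help here). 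A second, smaller gap: your Crossing Hypothesis argument asserts transversality ``away from the origin'' without identifying the tangency set; you should note, as the paper does, that on the $\omega_2$--eigenplane the coordinates $x,z$ (resp.\ $y,w$) vanish simultaneously only at the origin, so the periodic orbits at positive radius avoid $\CT$ and a compact neighbourhood $D$ with the crossing property exists.
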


\section{Proofs of Theorems \ref{t1} and \ref{t1a}}\label{s3}

Introducing the variables $(x,y,z,w)= (\T_1,\T'_1,\T_2,\T'_2)$ we
write the differential system of the non-smooth perturbed double pendulum
\eqref{e1a} as a first--order differential system defined in $\R^4$.
Thus we have the differential system
\begin{equation}\label{d1}
\begin{array}{LL}
x' =& y,\\
y' =& -ax+z+\e \left(K_1(\tau)+F_1(\tau,x,y,z,w)+(K_2(\tau)+F_2(\tau,x,y,z,w))\,{\rm sgn}(x)\right)\\
&+\e^2\,R_1(\tau,x,y,z,w,\e),\\
z' =& w,\\
w' =& bx-bz+\e \left(K_3(\tau)+F_3(\tau,x,y,z,w)+(K_4(\tau)+F_4(\tau,x,y,z,w))\,{\rm sgn}(z)\right)\\
&+\e^2\,R_2(\tau,x,y,z,w,\e).
\end{array}
\end{equation}

System \eqref{d1} with $\e=0$ is equivalent to the unperturbed
double pendulum system \eqref{e1a}, called in what follows simply by
the {\it unperturbed system}. Otherwise we have the {\it perturbed
system}.

\smallskip

Instead of working with the discontinuous differential system
\eqref{d1} we shall work with the smooth differential system
\begin{equation}\label{d1a}
\begin{array}{LL}
x' =& y,\\
y' = &-ax+z+\e \left(K_1(\tau)+F_1(\tau,x,y,z,w)+(K_2(\tau)+F_2(\tau,x,y,z,w))\,s_{\de}(y)\right)\\
&+\e\,R_1(\tau,x,y,z,w,\e),\\
z' = &w,\\
w' = &bx-bz+\e \left(K_3(\tau)+F_3(\tau,x,y,z,w)+(K_4(\tau)+F_4(\tau,x,y,z,w))\,s_{\de}(w)\right)\\
&+\e\,R_2(\tau,x,y,z,w,\e).
\end{array}
\end{equation}
where $s_{\de}(x)$ is the smooth function defined in Figure
\ref{fig2}, such that
\[
\lim_{\de \to 0} s_{\de}(x)= {\rm sgn}(x).
\]

\begin{figure}[h]
\psfrag{A}{$-1$} \psfrag{B}{$1$} \psfrag{D}{$\delta$}
\psfrag{E}{$-\delta$} \psfrag{I}{sign$(x)$}
\psfrag{J}{$s_{\delta}(x)$} \psfrag{X}{$x$}
\centerline{\includegraphics[width=10cm]{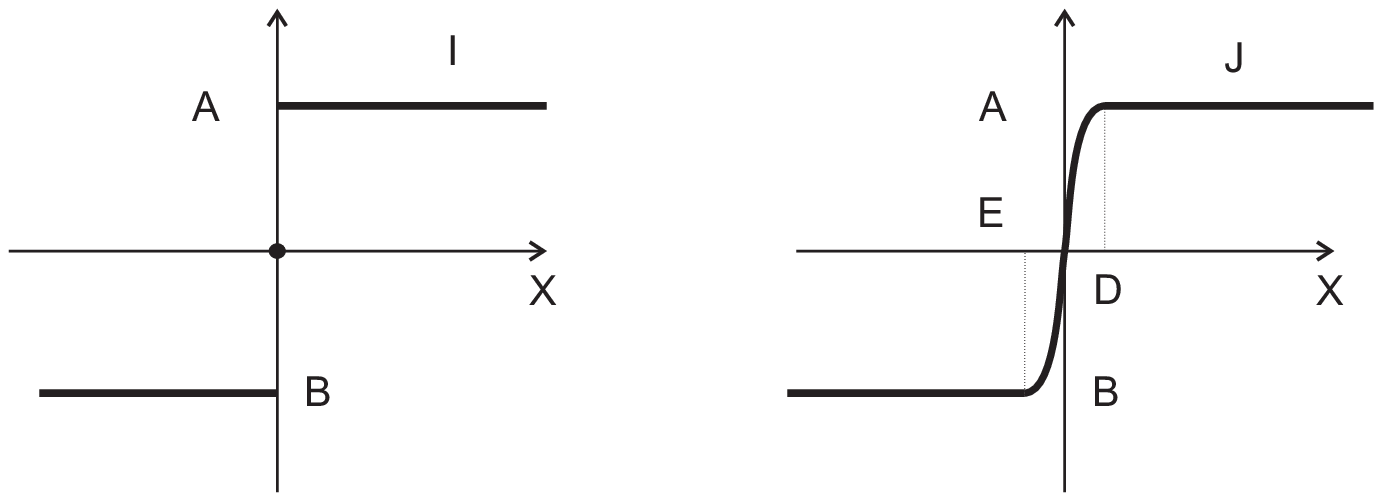}} \vskip 0cm
\centerline{} \caption{\small \label{fig2} The functions sign$(x)$
and $s_{\delta}(x)$.}
\end{figure}

\smallskip

We shall write system \eqref{d1a} in such a way that the linear part
at the origin of the unperturbed system will be in its real normal Jordan form. Then, doing
the change of variables $(\tau,x,y,z,w)\to (\tau,X,Y,Z,W)$ given by
\begin{equation}\label{nn}
\left(
\begin{array}{c}
X\\
Y\\
Z\\
W
\end{array}
\right)= \left(
\begin{array}{cccc}
 \dfrac{b \omega_{1}}{\sqrt{\Delta}} & 0 & \dfrac{\omega_{1} \left(a-b+\sqrt{\Delta}\right)}{2 \sqrt{\Delta}} & 0 \\
 0 & \dfrac{b}{\sqrt{\Delta}} & 0 & \dfrac{a-b+\sqrt{\Delta}}{2 \sqrt{\Delta}} \\
 -\dfrac{b \omega_{2}}{\sqrt{\Delta}} & 0 & \dfrac{\omega_{2}\left(-a+b+\sqrt{\Delta}\right)}{2 \sqrt{\Delta}} & 0 \\
 0 & -\dfrac{b}{\sqrt{\Delta}} & 0 & \dfrac{-a+b+\sqrt{\Delta}}{2 \sqrt{\Delta}}
\end{array}
\right)\left(
\begin{array}{c}
x\\
y\\
z\\
w
\end{array}
\right),
\end{equation}
the differential system \eqref{d1a} becomes
\begin{equation}\label{d2}
\begin{array}{LL}
X' =& \omega_{1} Y, \vspace{0.2cm}\\
Y' =& -\omega_{1} X+ \e \dfrac{1}{2\sqrt{\Delta}} \left(2b\left(K_1(\tau)+\tilde{F}_{1}+(K_2(\tau)+\tilde{F}_{2})\, s_{\de}(\A)\right)\right)\\
&+\e\dfrac{1}{2\sqrt{\Delta}} \left(\left(a-b+\sqrt{\Delta}\right)\left(K_3(\tau)+\tilde{F}_{3}+(K_4(\tau)+\tilde{F}_{4})\, s_{\de}(\C)\right) \right)+\e^2\tilde{R}_1,\\
Z' =& \omega_{2} W,\vspace{0.2cm}\\
W' =& -\omega_{2}Z+\e \dfrac{1}{2\sqrt{\Delta}} \left(-2b\left(K_1(\tau)+\tilde{F}_{1}+(K_2(\tau)+\tilde{F}_{2})\, s_{\de}(\A)\right)\right)\\
&+\e\dfrac{1}{2\sqrt{\Delta}}\left(\left(-a+b+\sqrt{\Delta}\right)\left(K_3(\tau)+\tilde{F}_{3}+(K_4(\tau)+\tilde{F}_{4})\, s_{\de}(\C)\right) \right)+\e^2\tilde{R}_2,
\end{array}
\end{equation}
where $\tilde F_i(t,X,Y,Z,W)=F_i(t,\A,\B,\C,\D)$ for $i=1,2,3,4$
with
\[
\begin{array}{l}
\A= \dfrac{\left(-a+b+\sqrt{\Delta}\right)}{2\, b\, \omega_{1}}X -\dfrac{ \left(a-b+\sqrt{\Delta}\right)}{2\, b\, \omega_{2}}Z,\vspace{0.2cm}\\
\B= \dfrac{ \left(-a+b+\sqrt{\Delta}\right)}{2 b}Y-\dfrac{ \left(a-b+\sqrt{\Delta}\right)}{2 b}W,\vspace{0.2cm}\\
\C= \dfrac{1}{\omega_{1}}X+\dfrac{1}{\omega_{2}}Z,\vspace{0.2cm}\\
\D= Y+W,
\end{array}
\]
and $\tilde{R}_i=\tilde{R}_i(X,Y,Z,W,\e)$.
Note that the linear part of the differential system \eqref{d2} at
the origin is in its real normal Jordan form.

\begin{lemma}\label{L1}
The periodic solutions of the differential system \eqref{d2} with
$\e=0$ are
\begin{equation}\label{d3}
\begin{array}{l}
X_{X_0,Y_0}{(\tau)}= X_0 \cos \left(\omega_{1}\, \tau \right) + Y_0 \sin
\left(\omega_{1}\, \tau \right),\\
Y_{X_0,Y_0}{(\tau)}= Y_0 \cos \left(\omega_{1}\, \tau \right)-X_0 \sin
\left(\omega_{1}\, \tau \right),\\
Z_{X_0,Y_0}{(\tau)}= 0,\\
W_{X_0,Y_0}{(\tau)}= 0,
\end{array}
\end{equation}
of period $T_1$, and
\begin{equation}\label{d4}
\begin{array}{l}
X^{Z_0,W_0}{(\tau)}= 0,\\
Y^{Z_0,W_0}{(\tau)}= 0,\\
Z^{Z_0,W_0}{(\tau)}= Z_0 \cos \left(\omega_{2}\, \tau \right)+W_0 \sin
\left(\omega_{2}\, \tau \right),\\
W^{Z_0,W_0}{(\tau)}= W_0 \cos \left(\omega_{2}\, \tau\right)-Z_0 \sin
\left(\omega_{2}\, \tau \right),
\end{array}
\end{equation}
of period $T_2$.
\end{lemma}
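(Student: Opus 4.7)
The plan is to exploit the block--diagonal structure of the linearization at the origin. Setting $\e=0$ in system \eqref{d2} yields the decoupled linear system
\begin{equation*}
X'=\omega_{1} Y,\quad Y'=-\omega_{1} X,\quad Z'=\omega_{2} W,\quad W'=-\omega_{2} Z,
\end{equation*}
which splits into two independent planar harmonic oscillators: one in $(X,Y)$ at angular frequency $\omega_{1}$ and one in $(Z,W)$ at angular frequency $\omega_{2}$. This block decomposition is the whole purpose of the coordinate change \eqref{nn}, which was set up to place the unperturbed linear part in real normal Jordan form. My first step, then, is simply to record that \eqref{nn} does produce this $2\times 2$ block form with rotation blocks of angular speeds $\omega_{1}$ and $\omega_{2}$.

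Next I would write down the general solution of the decoupled system by direct integration of each rotation block,
\begin{equation*}
\begin{array}{L}
X(\tau)=X_0\cos(\omega_{1}\tau)+Y_0\sin(\omega_{1}\tau),\\
Y(\tau)=Y_0\cos(\omega_{1}\tau)-X_0\sin(\omega_{1}\tau),\\
Z(\tau)=Z_0\cos(\omega_{2}\tau)+W_0\sin(\omega_{2}\tau),\\
W(\tau)=W_0\cos(\omega_{2}\tau)-Z_0\sin(\omega_{2}\tau),
\end{array}
\end{equation*}
and then restrict attention to the two distinguished invariant planes that will be relevant for the averaging argument in Theorems \ref{t1} and \ref{t1a}. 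Imposing $Z_0=W_0=0$ kills the $(Z,W)$ block and leaves an orbit entirely contained in $\{Z=W=0\}$ of exact period $T_1=2\pi/\omega_{1}$, which is precisely the family \eqref{d3}. Symmetrically, imposing $X_0=Y_0=0$ traps the motion inside $\{X=Y=0\}$ and produces the $T_2=2\pi/\omega_{2}$--periodic family \eqref{d4}.

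The one subtlety worth flagging is that in the full four--dimensional phase space a generic orbit with all four initial conditions nonzero is quasi--periodic when $\omega_{1}/\omega_{2}\notin\Q$, and periodic only with a larger common period (an integer multiple of both $T_1$ and $T_2$) when $\omega_{1}/\omega_{2}\in\Q$; only the two monochromatic families lying inside the invariant eigenplanes have the exact periods $T_1$ or $T_2$ that the averaging procedure will use later. Since the statement of the lemma singles out precisely these two periods, no further case analysis is needed, and the proof reduces to the direct verification sketched above. In short, there is no genuine obstacle here: once \eqref{nn} has been checked to produce the block--diagonal linearization, the lemma is an immediate integration.
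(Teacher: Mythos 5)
Your proposal is correct and coincides with the paper's argument, which simply observes that system \eqref{d2} with $\e=0$ is linear and that the solutions follow by direct integration of the two decoupled rotation blocks. Your extra remark about quasi-periodic orbits outside the two invariant eigenplanes is a useful clarification but not needed for the lemma as stated.
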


\begin{proof}
Since system \eqref{d2} with $\e=0$ is a linear differential system,
the proof follows easily.
\end{proof}

\begin{proof}[Proof of Theorem \ref{t1}]
Assume that the functions $\hat F_i$ and $\hat R_i$ of the non-smooth perturbed double pendulum
with equations of motion \eqref{e1a} are periodic in $t$ of period
$p\al T_1$ with $p$ positive integer. Thus $K_i$ and $F_i$ are $pT_1$--periodic functions, i.e., the differential system \eqref{e1a} and the
periodic solutions \eqref{d3} have the same period $pT_1$.

\smallskip

It is well known that a Poincar\'{e} map defined in a smooth differential system is smooth. So the Poincar\'{e} maps associated to the periodic orbits of the differential system \eqref{d1a} are smooth. 

\smallskip

The Poincar\'{e} maps, restricted at $D$, associated to the periodic solutions of the non-smooth differential system \eqref{d1}, which are perturbations of the periodic solutions \eqref{d3} are also smooth. Since the orbits starting in $D$ reaches the discontinuity set only at the of crossing region (see Appendix A), such Poincar\'{e} maps are compositions of smooth Poincar\'{e} maps. In a similar way it follows that the Poincar\'{e} maps, restricted at $D$, associated to the periodic solutions of the non-smooth differential system \eqref{d1}, which are perturbations of the periodic solutions \eqref{d4} are also smooth.

\smallskip

We can use Theorem \ref{tt} (see the Appendix B) for computing some periodic solutions of the smooth systems. The periodic solutions are zeros of the displacement function, which is the Poincar\'{e} map associated to periodic solutions minus the identity. In fact, the non-linear function \eqref{eq:7} whose zeros can provide periodic solutions, is the first term of order $\e$ of the displacement function. See for more details the proof of Theorem \ref{tt} in \cite{BFL}.

\smallskip

Since the Poincar\'{e} maps associated to periodic solutions of system \eqref{d1}, coming from the perturbed periodic solutions \eqref{d3} or \eqref{d4}, are smooth and these Poincar\'{e} maps are the limit of the Poincar\'{e} maps associated to the smooth system \eqref{d1a}, for which we can use Theorem \ref{tt}, it follows that we also can use Theorem \ref{tt} for computing some of the periodic solutions of the non-smooth system \eqref{d1}. In other words, we can apply Theorem \ref{tt} to the smooth systems \eqref{d1a} and then pass to the limit, when $\de\to0$, the function \eqref{eq:7} for obtaining a function whose zeros can give periodic solutions of the non-smooth system \eqref{d1}.

\smallskip

We shall apply Theorem \ref{tt} of the Appendix B to differential
system \eqref{d2}. We note that system \eqref{d2} can be written as
system \eqref{eq:4} taking
\[
{\bf x}=\left(
\begin{array}{c}
X\vspace{0.2cm}\\
Y\vspace{0.2cm}\\
Z\vspace{0.2cm}\\
W
\end{array}
\right), \quad t=\tau, \quad G_0(t,{\bf x})=\left(
\begin{array}{c}
 \omega_{1}\, Y,\vspace{0.2cm}\\
- \omega_{1}\, X,\vspace{0.2cm}\\
 \omega_{2}\, W,\vspace{0.2cm}\\
- \omega_{2}\, Z
\end{array}
\right),
\]
\[
G_1(t,{\bf x})=\left(
\begin{array}{c}
0 \vspace{0.2cm}\\
\dfrac{b}{\sqrt{\Delta}}\left(K_1(\tau)+\tilde{F}_{1}+(K_2(\tau)+\tilde{F}_{2})\,s_{\de}(\A)\right)\\+\dfrac{a-b+\sqrt{\Delta}}{2\sqrt{\Delta}}\left(K_3(\tau)+\tilde{F}_{3}+(K_4(\tau)+\tilde{F}_{4})\,s_{\de}(\C)\right)\vspace{0.2cm}\\
0 \vspace{0.2cm}\\
-\dfrac{b}{\sqrt{\Delta}}\left(K_1(\tau)+\tilde{F}_{1}+(K_2(\tau)+\tilde{F}_{2})\,s_{\de}(\A)\right)\\+\dfrac{-a+b+\sqrt{\Delta}}{2\sqrt{\Delta}}\left(K_3(\tau)+\tilde{F}_{3}+(K_4(\tau)+\tilde{F}_{4})\,s_{\de}(\C)\right) \end{array}
\right)
\]
\[
\mbox{and} \quad G_2(t,{\bf x,\e})=\left(
\begin{array}{c}
0 \vspace{0.2cm}\\
R_1 \vspace{0.2cm}\\
0 \vspace{0.2cm}\\
R_2
\end{array}
\right).
\]

We shall study which periodic solutions \eqref{d3} of the
unperturbed system \eqref{d2} with $\e=0$ can be continued to
periodic solutions of the perturbed system \eqref{d2} for $\e\neq
0$ sufficiently small.

\smallskip

We shall describe the different elements which appear in the
statement of Theorem \ref{tt} in the particular case of the
differential system \eqref{d2}. Thus we have that $\Omega= \R^4$,
$k=2$ and $n=4$. Let $r_1>0$ be arbitrarily small and let $r_2>0$ be
arbitrarily large. We take the open and bounded subset $V$ of the
plane $Z=W=0$ as
\[
V= \{ (X_0,Y_0,0,0)\in \R^4 : r_1< \sqrt{X_0^2+Y_0^2} < r_2 \}.
\]
As usual $\cl(V)$ denotes the closure of $V$. If $\al=(X_0,Y_0)$,
then we can identify $V$ with the set
\[
\{ \al\in \R^2 : r_1< ||\al|| < r_2 \},
\]
here $|| \cdot ||$ denotes the Euclidean norm of $\R^2$. The
function $\be: \cl(V)\to \R^2$ is $\be(\al)= (0,0)$. Therefore, in
our case the set
\[
\mathcal{Z}=\left\{ {\bf z}_{\alpha}=\left( \alpha,
\beta(\alpha)\right),~~\alpha\in \cl(V) \right\}= \{
(X_0,Y_0,0,0)\in \R^4 : r_1\leq \sqrt{X_0^2+Y_0^2} \leq r_2 \}.
\]
Clearly for each ${\bf z}_{\alpha}\in \mathcal{Z}$ we can consider
the periodic solution ${\bf x}(\tau, {\bf z}_{\alpha})= (
X(\tau),Y(\tau)$, $0,0)$ given by \eqref{d3} with period $pT_{1}$.

\smallskip

Computing the fundamental matrix $M_{{\bf z}_{\alpha}}(\tau)$ of the
linear differential system \eqref{d2} with $\e=0$ associated to the
$T$--periodic solution ${\bf z}_{\alpha}= (X_0,Y_0,0,0)$ such that
$M_{{\bf z}_{\alpha}}(0)$ be the identity of $\R^4$, we get that
$M(\tau)= M_{{\bf z}_{\alpha}}(\tau)$ is equal to
\[
\left(
\begin{array}{cccc}
\cos \left(\omega_{1}\, \tau\right) & \sin
\left(\omega_{1}\, \tau\right) & 0 & 0 \\
-\sin \left(\omega_{1}\, \tau\right) & \cos
\left(\omega_{1}\, \tau\right) & 0 & 0 \\
0 & 0 & \cos \left(\omega_{2}\, \tau\right) & \sin
\left(\omega_{2}\, \tau\right) \\
0 & 0 & -\sin \left(\omega_{2}\, \tau\right) & \cos
\left(\omega_{2}\, \tau\right)
\end{array}
\right).
\]
Note that the matrix $M_{{\bf z}_{\alpha}}(\tau)$ does not depend on
the particular periodic solution ${\bf x}(\tau, {\bf z}_{\alpha})$.
Since the matrix
\[
M^{-1}(0)-M^{-1}(pT_1)=\left(
\begin{array}{cccc}
 0 & 0 & 0 & 0 \\
 0 & 0 & 0 & 0 \\
 0 & 0 & 2 \sin ^2\left(\dfrac{p\pi\,\omega_{2} }{\omega_{1}}\right) & \sin \left(\dfrac{2p\pi\,\omega_{2} }{\omega_{1}}\right) \\
 0 & 0 & -\sin \left(\dfrac{2p\pi\,\omega_{2} }{\omega_{1}}\right) & 2 \sin ^2\left(\dfrac{p\pi\,\omega_{2} }{\omega_{1}}\right)
\end{array}\right),
\]
satisfies the assumptions of statement (ii) of Theorem \ref{tt}
because the determinant
\[
\left|
\begin{array}{ll}
2 \sin ^2\left(\dfrac{p\pi\,\omega_{2} }{\omega_{1}}\right) & \sin \left(\dfrac{2p\pi\,\omega_{2} }{\omega_{1}}\right) \\
 -\sin \left(\dfrac{2p\pi\,\omega_{2} }{\omega_{1}}\right) & 2 \sin ^2\left(\dfrac{p\pi\,\omega_{2} }{\omega_{1}}\right)\end{array}
\right|= 4 \sin ^2\left(\dfrac{p\pi\,\omega_{2} }{\omega_{1}}\right)\neq 0.
\]
So we can apply Theorem \ref{tt} to system \eqref{d2}.

\smallskip

Now $\xi: \R^4\to \R^2$ is $\xi(X,Y,Z,W)= (X,Y)$. We calculate, when $\de\to\ 0$, the
function
\[
\CG(X_0,Y_0)=\CG(\alpha)=\xi\left( \dfrac{1}{pT_{1}} \displaystyle \int
_0^{pT_1} M_{{\bf z}_{\alpha}}^{-1}(t)G_1(t,{\bf x}(t,{\bf
z}_{\alpha})) dt\right),
\]
and we obtain the function $\CG_2(X_0,Y_0)$
\begin{equation}\label{S1}
\begin{array}{L}
-\dfrac{1}{2\sqrt{\Delta}pT_{1} } \int_0^{pT_1}\left[
\sin\left(\omega_{1} \, \tau\right) \left(2b\left(K_1(\tau)+\bar{F}_{1}+(K_2(\tau)+\bar{F}_{2})\,{\rm sgn}\left(\dfrac{ \-a+b+\sqrt{\Delta}}{2 b}X_{X_0,Y_0}{(\tau)}\right)\right)\right.\right.\\
\left.\left.+\left(a-b+\sqrt{\Delta}\right)\left(K_3(\tau)+\bar{F}_{3}+(K_4(\tau)+\bar{F}_{4})\,{\rm sgn}(X_{X_0,Y_0}{(\tau)})\right)\right)\right] d\tau, \vspace{0.2cm}\\
\end{array}
\end{equation}
and the function $\CG_2(X_0,Y_0)$
\begin{equation}\label{S1a}
\begin{array}{L}
\dfrac{1}{2\sqrt{\Delta}pT_{1}} \int_0^{pT_1}\left[\cos
\left(\omega_{1} \, \tau\right) \left(+2b\left(K_1(\tau)+\bar{F}_{1}+(K_2(\tau)+\bar{F}_{2})\,{\rm sgn}\left(\dfrac{ \-a+b+\sqrt{\Delta}}{2 b}X_{X_0,Y_0}{(\tau)}\right)\right)\right.\right.\\
+\left.\left.\left(a-b+\sqrt{\Delta}\right)\left(K_3(\tau)+\bar{F}_{3}+(K_4(\tau)+\bar{F}_{4})\,{\rm sgn}(X_{X_0,Y_0}{(\tau)})\right)\right)\right] d\tau,
\end{array}
\end{equation}
where the functions of $\bar F_i$ for $i=1,2,3,4$  are the ones given in
\eqref{jj}. Note that $-a+b+\sqrt{\Delta}>0$, then
\[
{\rm sgn}\left(\dfrac{ -a+b+\sqrt{\Delta}}{2 b}X_{X_0,Y_0}{(\tau)}\right)={\rm sgn}\left(X_{X_0,Y_0}{(\tau)}\right),
\]
denoting by $K=1/(2\sqrt{\Delta}pT_{1})$, the function \eqref{S1} $\CG_1(X_0,Y_0)$ becomes
\[
\begin{array}{L}
-K \int_0^{pT_1}\sin\left(\omega_{1} \, \tau\right)(K_3(\tau)+\bar{F}_3) \left(a-b+\sqrt{\Delta}\right)+2 b (K_1(\tau)+\bar{F}_1)\,d\tau\\
-K \int_0^{pT_1}\sin\left(\omega_{1} \, \tau\right)\left((K_4(\tau)+\bar{F}_4) \left(a-b+\sqrt{\Delta}\right)+2 b (K_2(\tau)+\bar{F}_2)\right)\,{\rm sgn}(X_{X_0,Y_0}{(\tau)}) \,d\tau,
\end{array}
\]
and the function \eqref{S1a} $\CG_2(X_0,Y_0)$ becomes
\[
\begin{array}{L}
K \int_0^{pT_1}\cos\left(\omega_{1} \, \tau\right)(K_3(\tau)+\bar{F}_3) \left(a-b+\sqrt{D}\right)+2 b (K_1(\tau)+\bar{F}_1)\,d\tau\\
K \int_0^{pT_1}\cos\left(\omega_{1} \, \tau\right)\left((K_4(\tau)+\bar{F}_4) \left(a-b+\sqrt{D}\right)+2 b (K_2(\tau)+\bar{F}_2)\right)\,{\rm sgn}(X_{X_0,Y_0}{(\tau)}) \,d\tau.
\end{array}
\]

Then, by Theorem \ref{tt} we have that for every simple
zero $(X_0^*,Y_0^*)\in V$ of the system of non-linear and non-smooth functions
\begin{equation}\label{hh}
\CG_1(X_0,Y_0)=0 \quad,\quad \CG_2(X_0,Y_0)=0,
\end{equation}
we have a periodic solution $(X,Y,Z,W)(t,\e)$ of system
\eqref{d2} such that
\[
(X,Y,Z,W)(0,\e)\to (X_0^*,Y_0^*,0,0)\quad \mbox{as $\e\to 0$.}
\]
Note that system \eqref{hh} is equivalent to system \eqref{e4},
because both equations only differs in a non--zero multiplicative
constant.

\smallskip

Going back through the change of coordinates \eqref{nn} we get a
periodic solution $(x,y,z,w)(\tau,\e)$ of system \eqref{d3} such
that
\[
\left(
\begin{array}{c}
x(\tau,\e) \vspace{0.2cm}\\
y(\tau,\e) \vspace{0.2cm}\\
z(\tau,\e) \vspace{0.2cm}\\
w(\tau,\e)
\end{array}
\right)\to \left(
\begin{array}{c}
\dfrac{-a+b+\sqrt{\Delta}}{2\, b \,\omega_{1}} \left(X_0^* \cos \left(\omega_{1}\, \tau \right) + Y_0^* \sin
\left(\omega_{1}\, \tau \right)\right) \vspace{0.2cm}\\
\dfrac{-a+b+\sqrt{\Delta}}{2 b} \left(Y_0^* \cos \left(\omega_{1}\, \tau \right) - X_0^* \sin
\left(\omega_{1}\, \tau \right)\right) \vspace{0.2cm}\\
\dfrac{1}{\omega_{1}} \left(X_0^* \cos \left(\omega_{1}\, \tau \right) + Y_0^* \sin
\left(\omega_{1}\, \tau \right) \right) \vspace{0.2cm}\\
Y_0^* \cos \left(\omega_{1}\, \tau \right) - X_0^* \sin
\left(\omega_{1}\, \tau \right)\end{array}
\right)
\]
as $\e\to 0$.

\smallskip

Consequently we obtain a periodic solution $(\T_1,\T_2)(\tau,\e)$ of
system \eqref{pendulum2} such that
\[
(\T_1,\T_2)(\tau,\e)\to\left(
\begin{array}{c}
\dfrac{-a+b+\sqrt{\Delta}}{2\, b \,\omega_{1}} \left(X_0^* \cos \left(\omega_{1}\, \tau \right) + Y_0^* \sin
\left(\omega_{1}\, \tau \right)\right) \vspace{0.2cm}\\
\dfrac{1}{\omega_{1}} \left(X_0^* \cos \left(\omega_{1}\, \tau \right) + Y_0^* \sin
\left(\omega_{1}\, \tau \right) \right) \vspace{0.2cm}
\end{array}
\right)
\]
as $\e\to 0$. Hence Theorem \ref{t1} is proved.
\end{proof}

\smallskip

\begin{proof}[Proof of Theorem \ref{t1a}]
This proof is completely analogous to the proof of Theorem \ref{t1}.
\end{proof}

\section{Proofs of corollaries}\label{s4}
To obtain the expression of the functions given in \eqref{e3a} and \eqref{e3b} we have to study the changes of sign of the functions $X_{X_0,Y_0}{(\tau)}$ and $Z^{Z_0,W_0}{(\tau)}$ respectively for $t\in[0,pT_1]$ and $t\in[0,pT_2]$.

Note that $X_{X_0,Y_0}{(t_n)}=0$ for 
\[
t_n=\dfrac{1}{\omega_1}\left(\pi  n-\arctan\left(\dfrac{Y_0}{X_0}\right)\right).
\]

If $X_0Y_0<0$, then $t_n\in[0,pT_1]$ only for $n=0,1,\cdots,p+1$, and if $X_0Y_0>0$, then $t_n\in[0,pT_1]$ only for $n=1,2,\cdots,p+2$. We know that for all $t\in[t_n,t_{n+1}]$ the function $Y_{X_0,Y_0}{(\tau)}$ has the same sign and different sign for any $t\in[t_{n-1},t_n]$, thus the integral can be computed using the partitions $\{0,\,t_n,\,pT_1\,;n=0,1,\cdots,p+1\}$ and $\{0,\,t_n,\,pT_1\,;n=1,2,\cdots,p+2\}$ as the limits of integration respectively for $X_0Y_0<0$ and $X_0Y_0>0$.

The study of changes of the sign of the function $Z^{Z_0,W_0}{(\tau)}$ for $t\in[0,pT_2]$ and $Z_0W_0\neq0$ is completely analogous.

\begin{proof}[Proof of Corollary \ref{c1}]
Firstly, we have to check the {\it Crossing Hypothesis} (see Remark \ref{CH}) to the system \eqref{e1a}. 

Note that we have four different vector fields defined in four different regions (see Figure \ref{Regions}).

\smallskip

In the region $\CR_1=\{x>0 \textrm{ and } z>0\}$ we have
\[
\X_1=
\left(\begin{array}{l}
y\\
-ax+z+\e\dfrac{y+1}{\al^2}+\e^2R_1(t,x,y,z,w)\\
w\\
bx-bz+\e\dfrac{w+1}{\al^2}+\e^2R_2(t,x,y,z,w),
\end{array}\right).
\]

\smallskip

In the region $\CR_2=\{x<0 \textrm{ and } z>0\}$ we have
\[
\X_2=
\left(\begin{array}{l}
y\\
-ax+z+\e\dfrac{y-1}{\al^2}+\e^2R_1(t,x,y,z,w)\\
w\\
bx-bz+\e\dfrac{w+1}{\al^2}+\e^2R_2(t,x,y,z,w),
\end{array}\right).
\]
\smallskip

In the region $\CR_3=\{x<0 \textrm{ and } z<0\}$ we have
\[
\X_3=
\left(\begin{array}{l}
y\\
-ax+z+\e\dfrac{y-1}{\al^2}+\e^2R_1(t,x,y,z,w)\\
w\\
bx-bz+\e\dfrac{w-1}{\al^2}+\e^2R_2(t,x,y,z,w),
\end{array}\right).
\]
\smallskip

Finally, in the region $\CR_4=\{x>0 \textrm{ and } z<0\}$ we have
\[
\X_4=
\left(\begin{array}{l}
y\\
-ax+z+\e\dfrac{y+1}{\al^2}+\e^2R_1(t,x,y,z,w)\\
w\\
bx-bz+\e\dfrac{w-1}{\al^2}+\e^2R_2(t,x,y,z,w),
\end{array}\right).
\]

\begin{figure}[h]
\centering
\psfrag{B}{$z=0$}
\psfrag{D}{$x=0$}
\psfrag{U1}{$\X_1$}
\psfrag{U2}{$\X_2$}
\psfrag{U3}{$\X_3$}
\psfrag{U4}{$\X_4$}
\psfrag{g1}{$g_2^{-1}(0)$}
\psfrag{g2}{$g_1^{-1}(0)$}
\includegraphics[width=5cm]{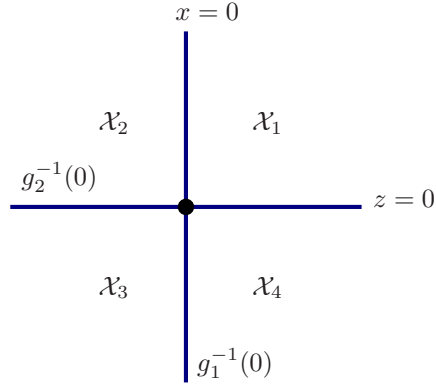}
\vskip 0cm \centerline{} \caption{\small \label{Regions} {\bf Four different vector fields.}}
\end{figure}

\smallskip

To study the types of the sets $\CM_{ij}$ (see Appendix A), we have to compute Lie derivative of the functions $g_1$ and $g_2$ with respect to the vector fields $\X_i$ for $i=1,2,3,4$, i.e.
\[
(\Lie_{\X_i})(g_j)(x,y,z,w)=\langle \nabla g_j,\X_i\rangle(x,y,z,w).
\]

Proceeding with these calculations we have
\[
\begin{array}{CL}
(\Lie_{\X_1})(g_1)(x,y,z,w)=(\Lie_{\X_2})(g_1)(x,y,z,w)=&y,\\

(\Lie_{\X_2})(g_2)(x,y,z,w)=(\Lie_{\X_3})(g_2)(x,y,z,w)=&w,\\

(\Lie_{\X_3})(g_1)(x,y,z,w)=(\Lie_{\X_4})(g_1)(x,y,z,w)=&y,\\

(\Lie_{\X_1})(g_2)(x,y,z,w)=(\Lie_{\X_4})(g_2)(x,y,z,w)=&w.
\end{array}
\]
Hence we can conclude that in the set
\[
\CT=\{(x,y,0,0)\}\bigcup\{(0,0,z,w)\},
\]
the flow is tangent to the discontinuous set, and in any other point the flow cross the set of discontinuity.

Using the coordinates defined in \eqref{nn}, we have that
\[
\CT=\left\{\left(X,Y,-\dfrac{\omega_2}{\omega_1}X,-Y\right)\right\}\bigcup\left\{\left(X,Y,\beta\dfrac{\omega_2}{\omega_1}X,\beta Y\right)\right\}.
\]
Observe that the periodic orbits given by Lemma \ref{L1} filling the planes $\{(X,Y,0,0)\}$ and $\{(0,0,X,Y)\}$, except the origin, do not reach the set $\CT$. Thus, for $|\e|>0$ sufficiently small, there exists a neighborhood of the planes $\{(X,Y,0,0)\}$ and $\{(0,0,X,Y)\}$ such that the orbits cross the set of discontinuity. In other words, the Crossing Hypothesis is satisfied.

Studying the changes of the sign of the function $X_{X_0,Y_0}{(\tau)}$ for $t\in\left[0,T_1\right]$ we conclude that the non-smooth functions \eqref{e3a} and \eqref{e3b} are given by
\[
\CF_1(X_0,Y_0)=
\left\{\begin{array}{LCC}
-\dfrac{2\sqrt{\Delta}\pi}{\omega_1}Y_0+\dfrac{4(a+b+\sqrt{\Delta})}{\omega_1\sqrt{1+\dfrac{Y_0^2}{X_0^2}}}&\textrm{if}& X_0>0,\\
-\dfrac{2\sqrt{\Delta}\pi}{\omega_1}Y_0-\dfrac{4(a+b+\sqrt{\Delta})}{\omega_1\sqrt{1+\dfrac{Y_0^2}{X_0^2}}}&\textrm{if}& X_0<0,\\
\end{array}\right.
\]

\[
\CF_2(X_0,Y_0)=
\left\{\begin{array}{LCC}
\dfrac{2\sqrt{\Delta}\pi}{\omega_1}Y_0-\dfrac{4(a+b+\sqrt{\Delta})}{\omega_1\sqrt{1+\dfrac{Y_0^2}{X_0^2}}}&\textrm{if}& X_0>0,\\
\dfrac{2\sqrt{\Delta}\pi}{\omega_1}Y_0+\dfrac{4(a+b+\sqrt{\Delta})}{\omega_1\sqrt{1+\dfrac{Y_0^2}{X_0^2}}}&\textrm{if}& X_0<0,\\
\end{array}\right.
\]

This system has all solutions inside a periodic orbit of the unperturbed systems passing through
\[
(X_0^*,Y_0^*)=\left(\dfrac{\sqrt{2}}{\sqrt{\Delta}\pi}(a+b+\sqrt{\Delta})\,,\,\dfrac{\sqrt{2}}{\sqrt{\Delta}\pi}(a+b+\sqrt{\Delta})\right).
\]
It is easy to check that this solution are
simple. So, by Theorem \ref{t1} we have one periodic solution
of the non-smooth perturbed double pendulum. This completes the proof of the
corollary.
\end{proof}

\begin{proof}[Proof of Corollary \ref{c2}]
This proof is completely analogous to the proof of Corollary \ref{c1}.
\end{proof}

\section*{Appendix A: Basic concepts on Filippov systems}\label{ap}

We say that a vector field $X:D\subset\R^n\rightarrow\R^n$ is {\it Piecewise Continuous} if the domain $D$ can be partitioned in a finite collection of connected, open and disjoint sets $D_i$, $i=1,\cdots,k$, such that, the vector field $X\big|_{\ov{D}_i}$ is continuous for $i=1,\cdots,k$.

\smallskip

We denote by $S_X\subset\p D_1\cup\cdots\cup\p D_k$ the set of points where the vector field $X$ is discontinuous. By assumptions, the set $S_X$ has measure zero.

\smallskip

If $\CM\subset S_X$ is a manifold of codimension one, then $\CM$ can be decomposed as the union of the closure of the regions (see Figure \ref{fig2}):
\[
\begin{array}{LL}
\Sigma^c=\left\{x\in\CM: (Xh)(Yh)(x)>0\right\};\\
\Sigma^e=\left\{x\in\CM: (Xh)(x)>0 \textrm{ e } (Yh)(x)<0\right\};\\
\Sigma^s=\left\{x\in\CM: (Xh)(x)<0 \textrm{ e } (Yh)(x)>0\right\}.\\
\end{array}
\]

\begin{figure}[h]
\centering
\psfrag{SC}{$\Sigma^c$}
\psfrag{SS}{$\Sigma^s$}
\psfrag{SE}{$\Sigma^e$}
\includegraphics[width=10cm]{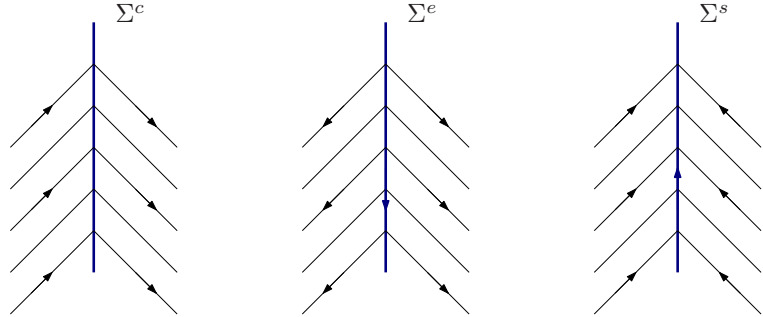}
\vskip 0cm \centerline{} \caption{\small \label{fig2} Crossing region $(\Sigma^c)$, escaping region $(\Sigma^e)$ and sliding region $(\Sigma^s)$.}
\end{figure}

\smallskip

For $p\in\Sigma^e\cup\Sigma^s$ we define the {\it Sliding Vector Field} as
\begin{equation}\label{cc}
Z_s(p)=\dfrac{1}{(Yh)(p)-(Xh)(p)}\left((Yh)(Xh)(p)-(Xh)(Yh)(p)\right).
\end{equation}

\smallskip

Consider the following equation
\begin{equation}\label{Ae1}
\dot{x}=X(x),
\end{equation}
where $X:D\subset\R^n\rightarrow\R^n$ is  a piecewise continuous vector field. The local solution of the equation \eqref{Ae1} passing through a point $p\in\CM$ is given by the Filippov convention: 

\begin{itemize}
\item[(i)] for $p\in\Sigma^c$ such that $(Xh)(p),(Yh)(p)>0$ and taking the origin of time at $p$, the trajectory is defined as $\f_Z(t,p)=\f_Y(t,p)$ for $t\in I_p\cap\{t<0\}$ and $\f_Z(t,p)=\f_X(t,p)$ for $t\in I_p\cap\{t>0\}$. For the case $(Xh)(p),(Yh)(p)<0$ the definition is the same reversing time;

\smallskip

\item[(i)] for $p\in\Sigma^e\cup\Sigma^s$ such that $Z_s(p)\neq 0$, $\f_Z(t,p)=\f_{Z_s}(t,p)$ for $t\in I_p\subset \R$.
\end{itemize}

\smallskip

Here $\f_W$ denotes the flow of a vector field $W$.

\smallskip

For more details about discontinuous differential equation see Filippov's book \cite{F}.

\section*{Appendix B: Basic results on averaging theory}\label{ap}

We present the basic result from the averaging
theory that we shall need for proving the main results of this
paper.

\smallskip

We consider the problem of the bifurcation of $T$--periodic
solutions from differential systems of the form
\begin{equation}\label{eq:4}
\dot {\bf x}(t)= G_0(t,{\bf x})+\e G_1(t,{\bf x})+\e^2 G_{2}(t,{\bf
x}, \e),
\end{equation}
with $\e=0$ to $\e\not= 0$ sufficiently small. Here the functions
$G_0,G_1: \R \times \Omega \to \R^n$ and $G_{2}:\R\times \Omega
\times (-\e_0,\e_0)\to \R^n$ are $\CC^2$ functions, $T$--periodic in
the first variable, and $\Omega $ is an open subset of $\R^n$. The
main assumption is that the unperturbed system
\begin{equation}\label{eq:5}
\dot {\bf x}(t)= G_0(t,{\bf x}),
\end{equation}
has a submanifold of periodic solutions. A solution of this problem
is given using the averaging theory.

\smallskip

Let ${\bf x}(t,{\bf z},\e)$ be the solution of the system
\eqref{eq:5} such that ${\bf x}(0,{\bf z},\e)= {\bf z}$. We write
the linearization of the unperturbed system along a periodic
solution ${\bf x}(t,{\bf z},0)$ as
\begin{equation}\label{eq:6}
\dot {\bf y}=D_{\bf x}{G_0}(t,{\bf x}(t,{\bf z},0)){\bf y}.
\end{equation}
In what follows we denote by $M_{\bf z}(t)$ some fundamental matrix
of the linear differential system \eqref{eq:6}, and by
$\xi:\R^k\times \R^{n-k}\to \R^k$ the projection of $\R^n$ onto its
first $k$ coordinates; i.e. $\xi(x_1,\ldots,x_n)= (x_1,\ldots,x_k)$.

\smallskip

We assume that there exists a $k$--dimensional submanifold
$\mathcal{Z}$ of $\Omega$ filled with $T$--periodic solutions of
\eqref{eq:5}. Then an answer to the problem of bifurcation of
$T$--periodic solutions from the periodic solutions contained in
$\mathcal{Z}$ for system \eqref{eq:4} is given in the following
result.

\begin{theorem}\label{tt}
Let $V$ be an open and bounded subset of $\R^k$, and let $\beta:
\cl(V)\to \R^{n-k}$ be a $\CC^2$ function. We assume that
\begin{itemize}
\item[(i)] $\mathcal{Z}=\left\{ {\bf z}_{\alpha}=\left( \alpha,
\beta(\alpha)\right),~~\alpha\in \cl(V) \right\}\subset \Omega$ and
that for each ${\bf z}_{\alpha}\in \mathcal{Z}$ the solution ${\bf
x}(t,{\bf z}_{\alpha})$ of \eqref{eq:5} is $T$--periodic;

\item[(ii)] for each ${\bf z}_{\alpha}\in \mathcal{Z}$ there
is a fundamental matrix $M_{{\bf z}_{\alpha}}(t)$ of \eqref{eq:6}
such that the matrix $M_{{\bf z}_{\alpha}}^{-1}(0)- M_{{\bf
z}_{\alpha}}^{-1}(T)$ has in the upper right corner the $k\times
(n-k)$ zero matrix, and in the lower right corner a $(n-k)\times
(n-k)$ matrix $\Delta_{\alpha}$ with $\det(\Delta_{\alpha})\neq 0$.
\end{itemize}
We consider the function $\CG:\cl(V) \to \R^k$
\begin{equation}\label{eq:7}
\CG(\alpha)=\xi\left( \dfrac{1}{T} \int _0^T M_{{\bf
z}_{\alpha}}^{-1}(t)G_1(t,{\bf x}(t,{\bf z}_{\alpha})) dt\right).
\end{equation}
If there exists $a\in V$ with $\CG(a)=0$ and $\displaystyle{\det
\left( \left({d\CG}/{d\alpha}\right)(a)\right)\neq 0}$, then there
is a $T$--periodic solution $\varphi (t,\e)$ of system \eqref{eq:4}
such that $\varphi(0,\e)\to {\bf z}_a$ as $\e\to 0$.
\end{theorem}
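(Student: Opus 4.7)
My plan is to prove Theorem \ref{tt} by the classical Lyapunov--Schmidt reduction applied to the stroboscopic displacement map of \eqref{eq:4}, with two successive uses of the implicit function theorem: one to eliminate the transverse direction off of $\mathcal{Z}$, and a second to locate the bifurcating initial condition inside $\mathcal{Z}$.

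Let $\mathbf{x}(t,\mathbf{x}_0,\e)$ denote the solution of \eqref{eq:4} with $\mathbf{x}(0,\mathbf{x}_0,\e)=\mathbf{x}_0$, and parametrize initial conditions near $\mathcal{Z}$ by $\mathbf{x}_0=\mathbf{z}_\alpha+(0_k,\gamma)$ with $\alpha\in\cl(V)$ and $\gamma\in\R^{n-k}$ small. Introduce the displacement
\[
\mathbf{d}(\alpha,\gamma,\e):=\mathbf{x}\bigl(T,\mathbf{z}_\alpha+(0,\gamma),\e\bigr)-\bigl(\mathbf{z}_\alpha+(0,\gamma)\bigr),
\]
whose zeros correspond exactly to $T$-periodic solutions of \eqref{eq:4}. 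Hypothesis (i) yields $\mathbf{d}(\alpha,0,0)\equiv 0$ on $\cl(V)$, and smooth dependence on data together with the variational equation \eqref{eq:6} gives
\[
\mathbf{d}(\alpha,\gamma,\e)=\bigl(M_{\mathbf{z}_\alpha}(T)-I\bigr)(0,\gamma)^T + \e\,M_{\mathbf{z}_\alpha}(T)\!\int_0^T M_{\mathbf{z}_\alpha}^{-1}(s)G_1\bigl(s,\mathbf{x}(s,\mathbf{z}_\alpha,0)\bigr)\,ds + O\bigl(|\gamma|^2+\e|\gamma|+\e^2\bigr).
\]

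Next I would left-multiply the equation $\mathbf{d}=0$ by the nonsingular matrix $M_{\mathbf{z}_\alpha}^{-1}(T)$. The leading $\gamma$-term becomes $\bigl(M_{\mathbf{z}_\alpha}^{-1}(0)-M_{\mathbf{z}_\alpha}^{-1}(T)\bigr)(0,\gamma)^T$, whose block structure from hypothesis (ii) splits the equation cleanly: the first $k$ coordinates (the tangential equation) have a zero linear-in-$\gamma$ part and read $\e\,\CG(\alpha)+O(\e|\gamma|+\e^2)=0$, where $\CG$ coincides (up to the normalisation $1/T$) with the function \eqref{eq:7}; the last $n-k$ coordinates (the transverse equation) read $\Delta_\alpha\,\gamma+\e\,\mathbf{b}(\alpha)+O(|\gamma|^2+\e|\gamma|+\e^2)=0$. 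Because $\Delta_\alpha$ is invertible on $\cl(V)$, the implicit function theorem yields a unique smooth $\gamma=\gamma(\alpha,\e)$ with $\gamma(\alpha,0)=0$ solving the transverse equation in a neighbourhood of $\cl(V)\times\{0\}$.

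Substituting $\gamma=\gamma(\alpha,\e)$ into the tangential equation and dividing by $\e$ leaves $\CG(\alpha)+O(\e)=0$. A second application of the IFT at the simple zero $a\in V$, using the nondegeneracy $\det\bigl((d\CG/d\alpha)(a)\bigr)\neq 0$, produces a smooth curve $\alpha=\alpha(\e)$ with $\alpha(0)=a$; then $\varphi(t,\e):=\mathbf{x}\bigl(t,\mathbf{z}_{\alpha(\e)}+(0,\gamma(\alpha(\e),\e)),\e\bigr)$ is the claimed $T$-periodic solution, and $\varphi(0,\e)\to\mathbf{z}_a$ as $\e\to 0$. The hardest bookkeeping step is to verify that the tangential leading-order term, after elimination of $\gamma$, equals precisely the expression in \eqref{eq:7}: this uses the identity $M_{\mathbf{z}_\alpha}^{-1}(T)M_{\mathbf{z}_\alpha}(T)=I$ to cancel the outer factor of $M_{\mathbf{z}_\alpha}(T)$ in the integral term, and the upper-right zero block of $M_{\mathbf{z}_\alpha}^{-1}(0)-M_{\mathbf{z}_\alpha}^{-1}(T)$ to guarantee that the tangential equation is genuinely free of any $O(\gamma)$ contribution before one divides by $\e$.
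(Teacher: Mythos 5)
Your Lyapunov--Schmidt reduction of the stroboscopic displacement map is exactly the argument the paper relies on: the paper does not prove Theorem \ref{tt} itself but defers to Malkin, Roseau and \cite{BFL}, and the proof in \cite{BFL} is precisely this two-step implicit-function-theorem scheme (eliminate the transverse variable via the invertible block $\Delta_\alpha$, then solve the tangential bifurcation equation $\CG(\alpha)+O(\e)=0$ at the simple zero $a$). The only slip is notational: the linear-in-$\gamma$ coefficient should be the monodromy $M_{\mathbf{z}_\alpha}(T)M_{\mathbf{z}_\alpha}^{-1}(0)-I$ rather than $M_{\mathbf{z}_\alpha}(T)-I$ (the theorem allows fundamental matrices not normalized at $t=0$), and it is this corrected form that, after left-multiplication by $M_{\mathbf{z}_\alpha}^{-1}(T)$, produces the matrix $M_{\mathbf{z}_\alpha}^{-1}(0)-M_{\mathbf{z}_\alpha}^{-1}(T)$ you then correctly use.
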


Theorem~\ref{tt} goes back to Malkin \cite{Ma} and Roseau \cite{Ro},
for a shorter proof see \cite{BFL}.

\section*{Acknowledgements}

The first author is partially supported by a MICIIN/FEDER grant
MTM2008--03437, an AGAUR grant number 2009SGR-0410, an ICREA
Academia and FP7--PEOPLE--2012--IRSES--316338. The second author is partially suported by the grant
FAPESP 2011/03896-0 The third author is partially supported by a
FAPESP--BRAZIL grant 2007/06896--5. The first and third authors are
also supported by the joint project CAPES-MECD grant
PHB-2009-0025-PC

\end{document}